\newtheorem{theorem}{Theorem}[section]
\newtheorem{lemma}[theorem]{Lemma}
\newtheorem{proposition}[theorem]{Proposition}
\newtheorem{conjecture}{Conjecture}[section]
\newtheorem{corollary}[theorem]{Corollary}
\renewcommand{\make@df@tag@@@}[2][]{%
	\gdef\df@tag{%
		\tagform@{#2\rlap{\hphantom)#1}}%
		\toks@\@xp{\p@equation{#2}}%
		\edef\@currentlabel{\the\toks@}%
	}%
}
\newcommand{\abs}[1]{\lvert#1\rvert}
\title{Factorisation of Greedoid Polynomials of Rooted Digraphs}
\author[1]{Kai Siong Yow\thanks{Email: \texttt{KaiSiong.Yow@gmail.com}}}
\author[2]{Kerri Morgan\thanks{Email: \texttt{Kerri.Morgan@deakin.edu.au}}}
\author[1]{Graham Farr\thanks{Email: \texttt{Graham.Farr@monash.edu}}}
\affil[1]{Faculty of Information Technology, Monash University, Clayton, Victoria 3800, Australia}
\affil[2]{Deakin University, Geelong, Australia, School of Information Technology, Faculty of Science Engineering \& Built Environment}
\date{\today}     %  started this file on 6 April 2012
\begin{document}

\maketitle

\begin{abstract}
	Gordon and McMahon defined a two-variable greedoid polynomial $ f(G;t,z) $ for any greedoid $ G $. They studied greedoid polynomials for greedoids associated with rooted graphs and rooted digraphs. They proved that greedoid polynomials of rooted digraphs have the multiplicative direct sum property. In addition, these polynomials are divisible by $ 1 + z $ under certain conditions. We compute the greedoid polynomials for all rooted digraphs up to order six. A greedoid polynomial $ f(D) $ of a rooted digraph $ D $ of order $ n $ \emph{GM-factorises} if $ f(D) = f(G) \cdot f(H) $ such that $ G $ and $ H $ are rooted digraphs of order at most $ n $ and $ f(G),f(H) \ne 1 $. We study the GM-factorability of greedoid polynomials of rooted digraphs, particularly those that are not divisible by $ 1 + z $. We give some examples and an infinite family of rooted digraphs that are not direct sums but their greedoid polynomials GM-factorise.
\end{abstract}

\emph{Keywords:} factorisation, greedoid polynomial, greedoid, directed branching greedoid, rooted digraph, arborescence

\section{Introduction}

Greedoids were introduced by Korte and Lov\'{a}sz as collections of sets that generalise matroids~\cite{KorteLovasz1981}. Korte and Lov\'{a}sz observed that the optimality of some ``greedy" algorithms including breadth-first search could be traced back to an underlying combinatorial structure that satisfies the greedoid, but not the matroid, framework. Bj{\"o}rner and Ziegler~\cite{BjornerZ1992} used two algorithmic constructions of a minimum spanning tree of a connected graph, i.e., Kruskal's and Prim's algorithms, to distinguish between greedoids and matroids. For each step in both algorithms, an edge with the minimum weight is added into the minimum spanning tree. The edge sets of the trees/forests that are obtained in each step form the feasible sets of a greedoid. Feasible sets obtained via Kruskal's algorithm remain feasible when removing any edge from the sets. However, this is not always true for feasible sets that are obtained via Prim's algorithm. Therefore, the greedoid that is obtained by using Kruskal's algorithm (but not Prim's algorithm) is in fact a matroid.

There are two equivalent ways to define greedoids, using set systems or hereditary languages~\cite{KorteLovasz1985,KorteLS1991}. We define greedoids based on set systems. A \emph{greedoid} over a finite ground set $ E $ is a pair $ (E,F) $ where $ F \subseteq 2^{E} $ is a collection of subsets of $ E $ (called the \emph{feasible sets}) satisfying:
\begin{itemize}\label{item:greedoid}
	\item[(G1)] For every non-empty $ X \in F $, there is an element $ x \in X $ such that $ X-\{x\} \in F $.
	\item[(G2)] For $ X,Y \in F $ with $ \abs{X} < \abs{Y} $, there is an element $ y \in Y-X $ such that $ X \cup \{y\} \in F $.
\end{itemize}

The \emph{rank} $ r(A) $ of a subset $ A \subseteq E $ in a greedoid $ (E,F) $ is defined as $ r(A)=\hbox{max}\{\abs{X}:X \subseteq A, X \in F\} $. Any greedoid is uniquely determined by its rank function.
\begin{theorem}\emph{\cite{KorteLovasz1984}}
	A function $ r:2^{E} \mapsto \mathbb{N} \cup \{0\} $ is the rank function of a greedoid $ (E,F) $ if and only if for all $ X,Y \subseteq E $ and for all $ x,y \in E $ the following conditions hold:
	\begin{itemize}
		\item[\emph{(R1)}] \emph{$r(X) \le \abs{X} $.}
		\item[\emph{(R2)}] \emph{If $ X \subseteq Y $, then $ r(X) \le r(Y) $.}
		\item[\emph{(R3)}] \emph{If $ r(X)=r(X \cup \{x\})=r(X \cup \{y\}) $, then $ r(X)=r(X \cup \{x\} \cup \{y\}) $.}
	\end{itemize}
\end{theorem}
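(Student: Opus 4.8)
The plan is to prove both directions of the equivalence. For \emph{necessity}, I would assume $r$ is the rank function of a greedoid $(E,F)$ and verify (R1)--(R3) directly from the definition $r(A)=\max\{|X|:X\subseteq A,\ X\in F\}$. For \emph{sufficiency}, I would start from an abstract $r$ satisfying (R1)--(R3), define the candidate feasible sets $F=\{A\subseteq E: r(A)=|A|\}$, show that $(E,F)$ is a greedoid (i.e.\ that $F$ satisfies (G1) and (G2)), and finally check that the rank function induced by this $F$ coincides with the given $r$.

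Necessity is the routine part. Axiom (R1) is immediate since every feasible $X\subseteq A$ has $|X|\le|A|$, and (R2) holds because a feasible subset of $X$ is also a feasible subset of any $Y\supseteq X$. For (R3) I would argue by contradiction: assuming $r(X)=r(X\cup\{x\})=r(X\cup\{y\})=k$ but $r(X\cup\{x,y\})>k$, I pick a feasible $B\subseteq X$ with $|B|=k$ and a feasible $C\subseteq X\cup\{x,y\}$ with $|C|\ge k+1$. Applying the exchange axiom (G2) to $B$ and $C$ yields some $z\in C-B$ with $B\cup\{z\}\in F$; since $z$ lies in $X$, or equals $x$, or equals $y$, each case produces a feasible set exceeding the corresponding rank $r(X)$, $r(X\cup\{x\})$, or $r(X\cup\{y\})$, a contradiction.

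The substance is in sufficiency. The key tool is an \emph{absorption lemma}: if $r(X\cup\{e\})=r(X)$ for every $e$ in a set $S$, then $r(X\cup S)=r(X)$. I would prove this by induction on $|S|$, the two-element case being exactly (R3); the inductive step sets $W=X\cup\{x_1,\dots,x_{k-2}\}$, uses the induction hypothesis three times to obtain $r(W)=r(W\cup\{x_{k-1}\})=r(W\cup\{x_k\})=r(X)$, and then applies (R3) with base $W$. Granting this lemma, first note $\emptyset\in F$ because $r(\emptyset)=0$. For (G2), take $X,Y\in F$ with $|X|<|Y|$; if no $y\in Y-X$ satisfied $r(X\cup\{y\})=|X|+1$, then, since (R1) and (R2) force $r(X\cup\{y\})\in\{|X|,|X|+1\}$, every such addition would preserve rank, so absorption would give $r(X\cup Y)=|X|$, contradicting $r(X\cup Y)\ge r(Y)=|Y|>|X|$. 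Axiom (G1) then follows for free: starting from $\emptyset$ and repeatedly augmenting via (G2) toward a given nonempty $X\in F$ builds a maximal feasible chain inside $X$, whose penultimate term is exactly $X-\{x\}\in F$. Finally, to show the induced rank $r'$ equals $r$, the inequality $r'(A)\le r(A)$ is immediate from (R2), while $r'(A)\ge r(A)$ comes from greedily enlarging a feasible subset of $A$: if a feasible $X\subseteq A$ with $|X|<r(A)$ admitted no feasible one-element extension inside $A$, absorption would force $r(A)=|X|<r(A)$, so one can always reach a feasible subset of $A$ of size exactly $r(A)$.

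The main obstacle I anticipate is that greedoids, unlike matroids, do \emph{not} satisfy the unit-increase property $r(A\cup\{e\})\le r(A)+1$ (one can have $r(\{b\})=0$ yet $r(\{a,b\})=2$), so the familiar matroid-style manipulations are unavailable. Consequently every augmentation and exchange step must be funnelled through the absorption lemma together with the size bound supplied by (R1); getting the bookkeeping of the induction in the absorption lemma right, and ensuring each augmentation stays \emph{inside} the relevant set so that (G1) and the rank equality follow, is where the care will be needed.
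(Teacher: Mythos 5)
The paper does not prove this theorem; it is quoted as background from Korte and Lov\'asz~\cite{KorteLovasz1984}, so there is no in-paper argument to compare against. Your proof is correct and is essentially the standard one: the necessity direction via the exchange axiom (G2) and a three-way case split on where the exchanged element lands, and the sufficiency direction via $F=\{A: r(A)=|A|\}$, with the absorption lemma (iterated (R3)) doing all the real work for (G2), for (G1), and for the identity $r'=r$. Your observation that greedoid rank functions lack the unit-increase property, so that every step must route through absorption plus the bound $r(X\cup\{y\})\in\{|X|,|X|+1\}$ coming from (R1), (R2) and $X\in F$, is exactly the right point of care, and your handling of it is sound.
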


Important classes of greedoids are those associated with rooted graphs and rooted digraphs. These are called \emph{branching greedoids} and \emph{directed branching greedoids}, respectively. We focus on directed branching greedoids. Hence, all our digraphs are rooted.

An \emph{arborescence}~\cite{Tutte1973} is a directed tree rooted at a vertex $ v $ such that every edge that is incident with $ v $ is an outgoing edge, and exactly one edge is directed into each of the other vertices. For every non-root vertex in an arborescence, there exists a unique directed path in the arborescence that leads from the root vertex to the non-root vertex. Occasionally, to highlight this property, people describe the root vertex as \emph{Rome}\footnote{From the proverb: All roads lead to Rome.}~\cite{Tutte1973}. Some authors define arborescences by reversing the direction of each edge in our definition, giving a set of arborescences that is different to ours. In this scenario, each unique directed path in the arborescence directs into rather than away from the root vertex. In both definitions, the number of arborescences rooted at each vertex is identical. To change from one definition to the other, simply reverse the direction for all the edges.

Let $ D $ be a rooted digraph. A subdigraph $ F $ of $ D $ is \emph{feasible} if $ F $ is an arborescence. We call the edge set of $ F $ a \emph{feasbile set}. If the edge set of $ F $ is maximal, then it is a \emph{basis}. A \emph{spanning arborescence} of $ D $ is a subdigraph of $ D $ that is an arborescence which includes every vertex of $ D $. The \emph{rank} of a subset $ X \subseteq E(D) $ is defined as $ r(X)=\hbox{max}\{\abs{A}:A \subseteq X, A \text{ is feasible}\} $.

A \emph{directed branching greedoid} over a finite set $ E $ of directed edges of a rooted digraph is a pair $ (E,F) $ where $ F $ is the set of feasible subsets of $ E $. This was defined and shown to be a greedoid by Korte and Lov\'{a}sz~\cite{KorteLovasz1984}.

Let $ G $ be a greedoid. Gordon and McMahon~\cite{GordonMcMahon1989} defined a two-variable greedoid polynomial of $ G $
\begin{equation*}
	f(G;t,z) = \sum_{A \subseteq E(G)} t^{r(G)-r(A)} z^{\abs{A}-r(A)}
\end{equation*}
which generalises the one-variable greedoid polynomial $ \lambda(G;t) $ given by Bj{\"o}rner and Ziegler in~\cite{BjornerZ1992}. We call the two-variable greedoid polynomial $ f(G;t,z) $ the \emph{greedoid polynomial}. The greedoid polynomial is motivated by the Tutte polynomial of a matroid~\cite{Tutte1954}, and is an analogue of the Whitney rank generating function~\cite{Whitney1932}. This polynomial is one of the digraph polynomials that is analogous of the Tutte polynomial. A survey of such polynomials for directed graphs can be found in~\cite{Chow2018}.

Gordon and McMahon studied greedoid polynomials for branching greedoids and directed branching greedoids. They showed that $ f(D;t,z) $ can be used to determine if a rooted digraph $ D $ is a rooted arborescence~\cite{GordonMcMahon1989}. However, this result does not hold when $ D $ is an unrooted tree~\cite{EisenstatGordon2006}.

Suppose $ D $, $ D_{1} $ and $ D_{2} $ are rooted digraphs, and $ E(D_{1}), E(D_{2}) \subseteq E(D) $. The digraph $ D $ is the \emph{direct sum} of $ D_{1} $ and $ D_{2} $, if $ E(D_{1}) \cup E(D_{2}) = E(D) $, $ E(D_{1}) \cap E(D_{2}) = \emptyset $ and the feasible sets of $ D $ are precisely the unions of feasible sets of $ D_{1} $ and $ D_{2} $. Gordon and McMahon proved that the greedoid polynomials of rooted digraphs have the multiplicative direct sum property, that is, if $ D $ is the direct sum of $ D_{1} $ and $ D_{2} $, then $ f(D;t,z) = f(D_{1};t,z) \cdot f(D_{2};t,z) $. This raises the question of whether this is the only circumstance in which this polynomial can be factorised. The Tutte polynomial of a graph $ G $ factorises if and only if $ G $ is a direct sum~\cite{MerinodeMierNoy2001}, but the situation for the chromatic polynomial is more complex~\cite{MorganF2009}.

Gordon and McMahon showed that the greedoid polynomial of a rooted digraph that is not necessarily a direct sum has $ 1 + z $ among its factors under certain conditions (see Theorems~\ref{thm:McMahon1993} and \ref{thm:GordonMcMahon1997}). We address more general types of factorisation in this article.

Gordon and McMahon gave a recurrence formula to compute $ f(D;t,z) $ where $ D $ is a rooted digraph. The following proposition gives the formula, which involves the usual deletion-contraction operations.
\begin{proposition}\emph{\cite{GordonMcMahon1989}}\label{pro:recurrence}
	Let $ D $ be a digraph rooted at a vertex $ v $, and $ e $ be an outgoing edge of $ v $. Then
	\begin{equation*}
		f(D;t,z) = f(D/e;t,z) + t^{r(D)-r(D\setminus e)}f(D\setminus e;t,z).
	\end{equation*}
\end{proposition}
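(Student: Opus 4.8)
The plan is to prove the recurrence directly from the definition of $f(D;t,z)$, by splitting the sum over subsets $A \subseteq E(D)$ according to whether $e \in A$, and matching the two resulting partial sums to the two terms on the right-hand side. Writing $r_D$ for the rank function of $D$, I start from
\begin{equation*}
f(D;t,z) = \sum_{A \not\ni e} t^{r(D)-r_D(A)} z^{\abs{A}-r_D(A)} + \sum_{A \ni e} t^{r(D)-r_D(A)} z^{\abs{A}-r_D(A)}.
\end{equation*}
The entire argument then reduces to two facts relating the rank function of $D$ to those of $D\setminus e$ and $D/e$.

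For the first sum, each $A$ with $e\notin A$ is a subset of $E(D\setminus e)$, and since the feasibility of a subset of $A$ (being an arborescence) depends only on the edges actually present, a set is feasible in $D$ if and only if it is feasible in $D\setminus e$; hence $r_D(A)=r_{D\setminus e}(A)$. Using the identity $t^{r(D)-r_D(A)} = t^{r(D)-r(D\setminus e)}\, t^{r(D\setminus e)-r_{D\setminus e}(A)}$ and factoring out the constant $t^{r(D)-r(D\setminus e)}$ from every term turns the first sum into $t^{r(D)-r(D\setminus e)} f(D\setminus e;t,z)$, which is precisely the second term of the recurrence.

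For the second sum I would write each $A$ containing $e$ as $A=A'\cup\{e\}$ with $A'\subseteq E(D)-\{e\}$, and invoke the key rank identity $r_D(A'\cup\{e\}) = r_{D/e}(A')+1$. Granting this, the exponents transform as $\abs{A}-r_D(A) = \abs{A'}-r_{D/e}(A')$ and $r(D)-r_D(A) = (r(D)-1)-r_{D/e}(A')$; combined with $r(D/e)=r(D)-1$ (the same identity applied to $A'=E(D)-\{e\}$), the second sum collapses to $f(D/e;t,z)$, the first term of the recurrence. Adding the two contributions yields the claimed formula.

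The main obstacle is establishing the contraction rank identity $r_D(A'\cup\{e\})=r_{D/e}(A')+1$. One inequality is immediate: since $e$ is an outgoing edge of the root, $\{e\}$ is feasible, so any feasible $B'\subseteq A'$ in $D/e$ gives a feasible $B'\cup\{e\}\subseteq A'\cup\{e\}$ in $D$, whence $r_D(A'\cup\{e\})\ge r_{D/e}(A')+1$. For the reverse inequality I must exhibit a maximum-size feasible subset $C$ of $A'\cup\{e\}$ that contains $e$, for then $C-\{e\}$ is feasible in $D/e$ of size $r_D(A'\cup\{e\})-1$. This is where the rooted structure is essential: taking a maximum arborescence $B\subseteq A'\cup\{e\}$ with $e=(v,w)\notin B$, if $w$ is not reached by $B$ then $B\cup\{e\}$ is a strictly larger arborescence, contradicting maximality, while if $w$ is reached by $B$ via an incoming edge $e'$, exchanging $e'$ for $e$ produces an arborescence $(B-\{e'\})\cup\{e\}$ of the same size containing $e$ (the subtree formerly hanging below $w$ is simply re-attached to the root through $e$). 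Here I use the greedoid fact, following from \emph{(G2)}, that all maximal feasible subsets of a fixed set share the same cardinality, so that ``maximal'' and ``maximum'' coincide. Verifying that this exchange genuinely yields an arborescence is the one step that warrants careful checking.
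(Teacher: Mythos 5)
The paper states this proposition without proof, citing Gordon and McMahon, so there is no in-paper argument to compare against; your derivation is correct and is essentially the standard one: split the subset sum over $e\in A$ versus $e\notin A$, identify the deletion term via $r_D(A)=r_{D\setminus e}(A)$, and identify the contraction term via the rank identity $r_D(A'\cup\{e\})=r_{D/e}(A')+1$, which your exchange argument (re-rooting the subtree below $w$ through $e$) establishes correctly. The only step worth making fully explicit is the two-way correspondence that $B'$ is feasible in $D/e$ if and only if $B'\cup\{e\}$ is feasible in $D$, since you use both directions (once to get the lower bound on $r_D(A'\cup\{e\})$ and once to pass from the maximum feasible $C\ni e$ back to $D/e$).
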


A \emph{greedoid loop}~\cite{McMahon1993} in a rooted graph, or a rooted digraph, is an edge that is in no feasible set. It is either an ordinary (directed) loop, or an edge that belongs to no (directed) path from the root vertex.
%In a rooted graph, a greedoid loop is either an ordinary directed loop, or an edge that is disconnected from the root vertex. In a rooted digraph, an edge $ e=vw $ that is directed from vertex $ v $ to vertex $ w $ is a greedoid loop if and only if $ w $ lies on every directed path from the root vertex to $ v $.

%Suppose $ D $ is a rooted digraph and $ \overline{D}\ $ is the greedoid with ground set $ E $ whose feasible sets are the edge sets of rooted arborescences.
\begin{theorem}\emph{\cite{McMahon1993}}\label{thm:McMahon1993}
	Let $ D $ be a rooted digraph that has no greedoid loops. Then $ D $ has a directed cycle if and only if $ 1+z $ divides $ f(D) $.
\end{theorem}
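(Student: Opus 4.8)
The plan is to reduce everything to the evaluation at $z=-1$. Since $1+z$ is monic in $z$, polynomial division in $(\mathbb{Z}[t])[z]$ shows that $1+z$ divides $f(D;t,z)$ if and only if the remainder $f(D;t,-1)$ vanishes identically as a polynomial in $t$, so I would aim to prove that $f(D;t,-1)=0$ exactly when $D$ has a directed cycle. Writing $R(A)$ for the set of vertices reachable from the root $v$ using only the edges in $A\subseteq E(D)$, the first step is the observation that $r(A)=\abs{R(A)}-1$, since a largest feasible subset of $A$ is a breadth-first arborescence spanning $R(A)$. Because $D$ has no greedoid loops, every edge has its tail reachable from $v$, so any unreachable vertex is isolated; discarding these I may assume $R(E(D))=V$ and $r(D)=n-1$.

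Next I would group the defining sum by the reachable set. Substituting $r(A)=\abs{R(A)}-1$,
\[
 f(D;t,-1)=\sum_{A\subseteq E(D)} t^{\,n-\abs{R(A)}}(-1)^{\abs{A}-\abs{R(A)}+1}
 =\sum_{S\ni v}(-1)^{\abs{S}-1}\,t^{\,n-\abs{S}}\sum_{A:\,R(A)=S}(-1)^{\abs{A}}.
\]
Fixing $S$ and classifying the edges of $D$ relative to $S$: edges with tail outside $S$ never affect reachability from $v$, so they may be included freely and contribute a factor $(1+(-1))=0$ whenever one exists; edges from $S$ to $V\setminus S$ are forced absent; and the edges inside $S$ must reach all of $S$. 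Hence the inner sum vanishes unless $S$ is \emph{closed}, meaning no edge of $D$ has its tail outside $S$ (equivalently, every vertex outside $S$ is a sink), in which case it equals $\Phi(D[S])$, where for a rooted digraph $H$ I set $\Phi(H)=\sum_{B}(-1)^{\abs{B}}$ over those $B\subseteq E(H)$ for which every vertex of $H$ is reachable from the root along $B$. For closed $S$ every vertex of $S$ is reachable inside $D[S]$, so $\Phi(D[S])$ is well defined and $f(D;t,-1)=\sum_{S\text{ closed}}(-1)^{\abs{S}-1}\Phi(D[S])\,t^{\,n-\abs{S}}$.

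The heart of the argument is the lemma: if $H$ is rooted with every vertex reachable from the root, then $\Phi(H)=(-1)^{\abs{V(H)}-1}$ when $H$ is acyclic and $\Phi(H)=0$ when $H$ contains a directed cycle. For the acyclic case I would use that, in such a DAG, a set $B$ reaches every vertex if and only if every non-root vertex has in-degree at least $1$ in $B$ (tracing in-edges backwards strictly decreases topological position and must stop at the unique source $v$); as the in-edge sets of distinct vertices are disjoint, the signed count factorises, each vertex contributing $\sum_{\emptyset\ne X\subseteq \mathrm{In}(w)}(-1)^{\abs{X}}=-1$, giving $(-1)^{\abs{V(H)}-1}$. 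The cyclic case is the main obstacle: I would establish the deletion--contraction identity $\Phi(H)=\Phi(H\setminus e)-\Phi(H/e)$ for an edge $e=(v,b)$ leaving the root, where $H/e$ merges $b$ into the root and the sign arises from $B=\{e\}\cup B'$ together with the fact that, once $e$ is present, reachability from $v$ coincides with reachability from the merged vertex. Then I would induct on $\abs{E(H)}$, first disposing by a toggling involution of the cases that force $\Phi=0$ outright (an unreachable vertex, a self-loop, or an edge into the root, each of which also witnesses a cycle), and in the remaining case checking that a directed cycle avoiding the root survives in both $H\setminus e$ and $H/e$, so both terms vanish; tracking how acyclicity behaves under the contraction $H/e$ is the delicate point.

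Finally I would assemble the pieces. By the lemma $(-1)^{\abs{S}-1}\Phi(D[S])$ equals $1$ if $D[S]$ is acyclic and $0$ otherwise, so
\[
 f(D;t,-1)=\sum_{\substack{S\text{ closed}\\ D[S]\text{ acyclic}}} t^{\,n-\abs{S}},
\]
a polynomial with non-negative coefficients. If $D$ is acyclic then $S=V$ is closed and contributes the term $t^{0}=1$, whence $f(D;t,-1)\ne 0$ and $1+z\nmid f(D)$. If $D$ has a directed cycle $C$, then every vertex of $C$ has out-degree at least $1$ and therefore lies in every closed set $S$; since $D[S]$ is an induced subdigraph it contains all of $C$, so no closed $S$ yields an acyclic $D[S]$ and $f(D;t,-1)=0$, i.e.\ $1+z\mid f(D)$. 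This proves both directions.
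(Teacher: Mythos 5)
The paper does not actually prove this statement: it is quoted from McMahon's 1993 paper, and the only proof technique the present paper displays elsewhere is induction via the deletion--contraction recurrence of Proposition~\ref{pro:recurrence}. Your argument is therefore a genuinely different, self-contained route. Reducing divisibility by $1+z$ to the identical vanishing of $f(D;t,-1)$ is legitimate since $1+z$ is monic in $z$; the identity $r(A)=\abs{R(A)}-1$ (with $R(A)$ the set of vertices reachable from the root along $A$) is correct because a maximal feasible subset of $A$ is an arborescence spanning $R(A)$; the cancellation from edges with tail outside $S$, the restriction to closed $S$, and the in-edge factorisation giving $\Phi(H)=(-1)^{\abs{V(H)}-1}$ for acyclic $H$ all check out. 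The resulting formula $f(D;t,-1)=\sum t^{\,n-\abs{S}}$, summed over closed $S$ with $D[S]$ acyclic, settles both directions at once: the sum contains the term $S=V$ when $D$ is acyclic, and is empty when $D$ has a cycle because every cycle vertex has positive out-degree and hence lies in every closed $S$. What this buys over a recurrence-based proof is an explicit nonnegative combinatorial interpretation of $f(D;t,-1)$ for loopless digraphs, which is strictly more information than the divisibility statement.

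The one place you must tighten is the cyclic case of the lemma $\Phi(H)=0$. The identity $\Phi(H)=\Phi(H\setminus e)-\Phi(H/e)$ for a root edge $e=(v,b)$ is correct, but the summary ``a directed cycle avoiding the root survives in both $H\setminus e$ and $H/e$'' fails literally when $b$ lies on the cycle: contracting $e$ then turns the cycle into a closed walk through the new root, so $H/e$ need not contain a cycle avoiding the root --- instead it acquires an edge into the root. That situation is still handled by the toggling involutions you listed (an edge into the root never affects reachability from it, so $\Phi=0$), just as $H\setminus e$ possibly having an unreachable vertex is handled by the empty-sum observation; so the induction does close, but the case analysis needs to be written out rather than asserted. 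With that paragraph made precise, the proof is complete and correct.
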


Let $ G $ be a greedoid. A subset $ S \subseteq E(G) $ is \emph{spanning} if $ S $ contains a basis. Gordon and McMahon gave a graph-theoretic interpretation for the highest power of $ 1 + z $ which divides $ f(G) $ in the following theorem.
\begin{theorem}\emph{\cite{GordonMcMahon1997}}\label{thm:GordonMcMahon1997}
	Let $ G $ be the directed branching greedoid associated with a rooted digraph $ D $ with no greedoid loops or isolated vertices. If $ f(G;t,z) = (1 + z)^{k}h(t,z) $, where $ 1 + z $ does not divide $ h(t,z) $, then $ k $ is the minimum number of edges that need to be removed from $ D $ to leave a spanning acyclic directed graph.
\end{theorem}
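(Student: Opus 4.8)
The plan is to prove the sharp quantitative statement that the exponent $k$ equals $\gamma(D)$, the minimum number of edges whose deletion leaves a spanning acyclic subdigraph, by induction on $\abs{E(D)}$ via the deletion--contraction recurrence of Proposition~\ref{pro:recurrence}. Before the induction it helps to record what $k$ means algebraically. Since $(1+z)^{k}$ contains no $t$, it divides $f(D)$ precisely when it divides every coefficient of $f$ viewed as a polynomial in $t$; writing $\nu(A)=\abs{A}-r(A)$, the coefficient of $t^{0}$ is
\begin{equation*}
	[t^{0}]f(D)=\sum_{\substack{A\subseteq E(D)\\ r(A)=r(D)}} z^{\nu(A)},
\end{equation*}
the generating function of the spanning subsets of $D$ by nullity. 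Hence $k$ is the least $(1+z)$-adic valuation occurring among the $t$-coefficients of $f$, and the appearance of spanning subsets already indicates that the governing invariant measures how far $D$ is from being spanning and acyclic. This reformulation both suggests the inductive statement and supplies the single test coefficient used for the upper bound.

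For the induction I take an edge $e$ leaving the root $v$ and write $f(D)=f(D/e)+t^{\delta}f(D\setminus e)$ with $\delta=r(D)-r(D\setminus e)\in\{0,1\}$. Two auxiliary facts come first. If $\ell$ is a greedoid loop then pairing each subset $A$ with $A\cup\{\ell\}$ yields $f(D)=(1+z)\,f(D\setminus\ell)$, so each greedoid loop contributes exactly one factor $1+z$; and an isolated vertex leaves $f$ unchanged. These let me reduce any minor produced by the recurrence to one meeting the theorem's hypotheses while recording the extra $(1+z)$ factors explicitly, so the inductive hypothesis applies legitimately. The combinatorial core is then a short list of lemmas on how $\gamma$ behaves under the two operations: contraction of a root edge does not increase $\gamma$, deletion changes it in a controlled way, and $\delta=1$ exactly when $e$ lies in every spanning arborescence. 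Together with the base cases (a single spanning arborescence has $\gamma=0$ and, by Theorem~\ref{thm:McMahon1993}, is not divisible by $1+z$) these anchor the argument.

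The lower bound $(1+z)^{\gamma}\mid f(D)$ is the easy half: if $D$ is acyclic there is nothing to prove, and otherwise I choose $e$ on a directed cycle, apply the inductive hypothesis to the two minors, and combine valuations using the elementary fact that the $(1+z)$-adic valuation of a sum is at least the minimum of the valuations of its terms. The multiplicative direct-sum property reduces this to digraphs that admit no splitting, keeping the cycle structure connected. The upper bound $k\le\gamma$ is the delicate half, and here the simplest route is to show that $[t^{0}]f$ is not divisible by $(1+z)^{\gamma+1}$; since $k$ cannot exceed the valuation of any coefficient, $k\le\gamma$ will follow. Equivalently I must show that the $\gamma$-th derivative in $z$ of $[t^{0}]f$ does not vanish at $z=-1$, i.e.\ that
\begin{equation*}
	\sum_{\substack{A\subseteq E(D)\\ r(A)=r(D)}} \binom{\nu(A)}{\gamma}(-1)^{\nu(A)} \neq 0.
\end{equation*}
To evaluate this signed sum I would fix a minimum spanning acyclic subdigraph $D\setminus R_{0}$ with $\abs{R_{0}}=\gamma$ and build a sign-reversing involution on the relevant spanning subsets that cancels every contribution except a residual class determined by $R_{0}$, thereby certifying non-vanishing.

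The main obstacle is the control of cancellation in the additive recurrence and its counterpart in this upper bound. For the lower bound the valuations merely add, which is automatic; but to forbid a higher power of $1+z$ I must rule out the possibility that $f(D/e)$ and $t^{\delta}f(D\setminus e)$, when their valuations coincide, cancel at leading order and so raise the valuation. I expect this to hinge on the precise relationship among $\gamma(D)$, $\gamma(D/e)$ and $\gamma(D\setminus e)$ together with the rank bookkeeping encoded by $\delta$, while the sign-reversing involution on spanning subsets is exactly the device that certifies non-cancellation independently of the recurrence. Managing the greedoid loops and rank shifts created by contraction, so that each minor genuinely satisfies the inductive hypothesis, is the remaining technical burden rather than a conceptual one.
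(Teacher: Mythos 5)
This theorem is not proved in the paper at all: it is quoted verbatim from Gordon and McMahon \cite{GordonMcMahon1997}, so there is no in-paper argument to compare against. Judged on its own terms, your proposal is a reasonable high-level plan, but it has genuine gaps at exactly the two places where all of the difficulty lives. First, the upper bound $k\le\gamma$ rests entirely on the claim that $(1+z)^{\gamma+1}$ does not divide $[t^{0}]f$, and your only evidence for this is a sign-reversing involution that you say you ``would build''; nothing in the proposal indicates what the involution is, why the surviving class is nonempty, or why the surviving terms do not themselves cancel in the signed binomial sum. Since this single step is the entire content of the non-divisibility half of the theorem, deferring it leaves the proof unfinished rather than merely unpolished. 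Second, the lower bound as described does not quite work with Proposition~\ref{pro:recurrence}: that recurrence only applies to an edge $e$ leaving the root, whereas you propose to ``choose $e$ on a directed cycle''; a digraph can have all of its directed cycles disjoint from the root's out-edges (e.g.\ a pendant path from the root into a $2$-cycle), so the choice you rely on is not always available. To run the induction you instead need, and do not prove, the precise comparison lemmas relating $\gamma(D)$ to $\gamma(D/e)$ and $\gamma(D\setminus e)$ together with the count of greedoid loops created by deletion and contraction (contraction can turn an edge into the root into an ordinary loop, and deletion can strand many edges at once), all while preserving the ``spanning'' requirement in the definition of $\gamma$. You flag the cancellation issue in the additive recurrence yourself, which is honest, but flagging it is not resolving it.

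It is worth knowing that the published proof avoids both obstacles by a different device: Gordon and McMahon partition the Boolean lattice $2^{E}$ into intervals indexed by the feasible sets of a computation tree, obtaining an expansion $f(G;t,z)=\sum_{F}t^{a_{F}}(1+z)^{b_{F}}$ with nonnegative integer exponents. Because every term is a nonnegative combination, there is no cancellation anywhere, the exponent $k$ is simply $\min_{F}b_{F}$, and the whole problem reduces to the purely combinatorial identification of $\min_{F}b_{F}$ with the minimum number of edges whose removal leaves a spanning acyclic subdigraph. If you want to complete your approach, the most economical fix is to prove such a positive expansion first (it follows by organising your own deletion--contraction tree and recording, at each leaf, the greedoid loops accumulated along the way) and then read off the valuation, rather than fighting cancellation in the signed sum over spanning subsets.
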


Tedford~\cite{Tedford2009} defined a three-variable greedoid polynomial $ f(G;t,p,q) $ for any finite rooted graph $ G $, which generalises the two-variable greedoid polynomial. He showed that $ f(G;t,p,q) $ obeys a recursive formula. He also proved that $ f(G;t,p,q) $ determines the number of greedoid loops in any rooted graph $ G $. His main result shows that $ f(G;t,p,q) $ distinguishes connected rooted graphs $ G $ that are loopless and have at most one cycle. He extended $ f(G;t,p,q) $ from rooted graphs to general greedoids, and proved that the polynomial determines the number of loops for a larger class of greedoids.

In this article, we compute the greedoid polynomials for all rooted digraphs (up to isomorphism unless otherwise stated) up to order six. All the labelled rooted digraphs (without loops and multiple edges, but cycles of size two are allowed) up to order six were provided by Brendan McKay\footnote{More combinatorial data can be found at \href{https://users.cecs.anu.edu.au/~bdm/data/}{https://users.cecs.anu.edu.au/\textasciitilde bdm/data/}.} on 28 March 2018 (personal communication from McKay to Farr). We then study the factorability of these polynomials, particularly those that are not divisible by $ 1 + z $.

Two rooted digraphs are \emph{GM-equivalent} if they both have the same greedoid polynomial. If a rooted digraph is a direct sum, then it is \emph{separable}. Otherwise, it is \emph{non-separable}.

A greedoid polynomial $ f(D) $ of a rooted digraph $ D $ of order $ n $ \emph{GM-factorises} if $ f(D) = f(G) \cdot f(H) $ such that $ G $ and $ H $ are rooted digraphs of order at most $ n $ and $ f(G),f(H) \ne 1 $. Note that $ f(G) $ and $ f(H) $ are not necessarily distinct. The polynomials $ f(G) $ and $ f(H) $ are \emph{GM-factors} of $ f(D) $. We also say a rooted digraph $ D $ \emph{GM-factorises} if its greedoid polynomial GM-factorises. Every rooted digraph that is a direct sum has a GM-factorisation.

An irreducible GM-factor is \emph{basic} if the GM-factor is either $ 1 + t $ or $ 1 + z $. Otherwise, the irreducible GM-factor is \emph{nonbasic}. We are most interested in nonbasic GM-factors. A GM-factor is \emph{primary} if it is irreducible, nonbasic and is not a GM-factor of any greedoid polynomial of rooted digraphs of smaller order. Such a factor appears as a GM-factor only in rooted digraphs with at least as many vertices as the current one. For $ k \ge 1 $, a non-separable digraph is \emph{$ k $-nonbasic} if its greedoid polynomial has $ k $ nonbasic GM-factors. A non-separable digraph is \emph{totally $ k $-nonbasic} if it is $ k $-nonbasic and contains no basic GM-factors. Likewise, a non-separable digraph is \emph{$ k $-primary} if its greedoid polynomial has $ k $ primary GM-factors. A non-separable digraph is \emph{totally $ k $-primary} if it is $ k $-primary and contains no basic GM-factors. It follows that if a non-separable digraph is (totally) $ k $-primary, then the digraph is (totally) $ k $-nonbasic.

Our results show that there exist non-separable digraphs that GM-factorise and their polynomials have neither $ 1 + t $ nor $ 1 + z $ as factors. In some cases (but not all), these non-separable digraphs of order $ n $ are GM-equivalent to a separable digraph of order at most $ n $. We give the numbers of polynomials of this type of non-separable digraph. For rooted digraphs up to order six and $ k \ge 2 $, we found that there exist no $ (k+1) $-nonbasic digraphs and no $ k $-primary digraphs. We also provide the numbers of $ 2 $-nonbasic digraphs, totally $ 2 $-nonbasic digraphs, $ 1 $-primary digraphs and totally $ 1 $-primary digraphs. We then give the first examples of totally $ 2 $-nonbasic and totally $ 1 $-primary digraphs. Lastly, we give an infinite family of non-separable digraphs where their greedoid polynomials factorise into at least two non-basic GM-factors.
%For $ n \in \{1,\ldots,6\} $, we compute the numbers of non-separable digraphs of order $ n $ that are GM-equivalent to some separable digraph of order at most $ n $.

\section{Results}

The greedoid polynomials of all rooted digraphs up to order six were computed based on the deletion-contraction recurrence in Proposition~\ref{pro:recurrence}. We simplified and factorised all these greedoid polynomials using Wolfram Mathematica.

Throughout, rooted digraphs are up to isomorphism unless stated otherwise.

\subsection{Separability and Non-separability}

For each order, we determined the numbers of rooted digraphs, separable digraphs, non-separable digraphs, and non-separable digraphs of order $ n $ that are GM-equivalent to some separable digraph of order at most $ n $ (see Table~\ref{table:summary}, and the list of abbreviations in Table~\ref{table:abbreviation_table_1}).

Note that the sequences of numbers of labelled rooted digraphs (T) and rooted digraphs (T-ISO) are not listed in The On-Line Encyclopedia of Integer Sequences (OEIS).

We observe that the ratio of T-ISO to T shows an increasing trend. The ratio of NS to T-ISO is also increasing (for $ n \ge 3 $), as expected.

\begin{table}[H]
	\centering
	\begin{tabularx}{\linewidth}{|l|X|}
		\hline
		\multicolumn{1}{|c}{Abbreviation} & \multicolumn{1}{|c|}{Description}\\
		\hline \hline
		
		T & Number of labelled rooted digraphs\\
		\hline
		T-ISO & Number of rooted digraphs\\
		\hline
		S & Number of separable digraphs\\
		\hline
		NS & Number of non-separable digraphs\\
		\hline
		NSE & Number of non-separable digraphs of order $ n $ that are GM-equivalent to some separable digraph of order at most $ n $\\
		
		\hline
	\end{tabularx}
	\caption{Abbreviations for Table~\ref{table:summary}}
	\label{table:abbreviation_table_1}
\end{table}

\begin{table}[H]
	\centering
	\begin{tabular}{|c|r||r|r|r|r|}
		\hline
		\multicolumn{1}{|c}{$ n $} & \multicolumn{1}{|c||}{T} & \multicolumn{1}{c|}{T-ISO} & \multicolumn{1}{c|}{S} & \multicolumn{1}{c|}{NS} & \multicolumn{1}{c|}{NSE}\\
		\hline \hline
		
		1 & 1 & 1 & 0 & 1 & 0\\
		
		2 & 6 & 4 & 0 & 4 & 0\\
		
		3 & 48 & 36 & 6 & 30 & 7\\
		
		4 & 872 & 752 & 88 & 664 & 200\\
		
		5 & 48040 & 45960 & 2404 & 43556 & 10641\\
		
		6 & 9245664 & 9133760 & 150066 & 8983694 & 1453437\\		
		\hline
	\end{tabular}
	\caption{Numbers of various types of rooted digraphs (up to order six)}
	\label{table:summary}
\end{table}

For each order, we also provide the number PU of unique greedoid polynomials and the ratio of PU to T-ISO, in Table~\ref{table:unique_poly}.
\begin{table}[H]
	\centering
	\begin{tabular}{|c|r|r|r|}
		\hline
		\multicolumn{1}{|c}{$ n $} & \multicolumn{1}{|c|}{T-ISO} & \multicolumn{1}{c|}{PU} & \multicolumn{1}{c|}{PU/T-ISO}\\
		\hline \hline
		
		1 & 1 & 1 & 1.0000\\
		
		2 & 4 & 4 & 1.0000\\
		
		3 & 36 & 22 & 0.6111\\
		
		4 & 752 & 201 & 0.2673\\
		
		5 & 45960 & 6136 & 0.1335\\
		
		6 & 9133760 & 849430 & 0.0930\\		
		\hline
	\end{tabular}
	\caption{Numbers PU of unique greedoid polynomials of rooted digraphs (up to order six) and the ratio of PU to T-ISO}
	\label{table:unique_poly}
\end{table}
\noindent
Bollob\'{a}s, Pebody and Riordan conjectured that almost all graphs are determined by their chromatic or Tutte polynomials~\cite{BollobasPR2000}. However, this conjecture does not hold for matroids. The ratio of the number of unique Tutte polynomials of matroids to the number of non-isomorphic matroids approaches 0 as the cardinality of matroids increases, which can be shown using the bounds given in Exercise 6.9 in~\cite{BrylawskiO1992}. We believe that greedoid polynomials of rooted digraphs behave in a similar manner as matroids. According to our findings, the ratio of PU to T-ISO shows a decreasing trend. We expect that as $ n $ increases, this ratio continues to decrease. The question is, does this ratio eventually approach 0 or is it bounded away from 0? Further computation may give more insight on this question.

\subsection{Factorability}

For $ n \in \{1, \ldots, 5\} $, we identified the numbers of greedoid polynomials that GM-factorise for rooted digraphs of order $ n $. Details are given in Table~\ref{table:factorability} (see Table~\ref{table:abbreviation_table_2} for the list of abbreviations and Figure~\ref{fig:VennDiagram_factorability} for the corresponding Venn diagram).
\begin{table}[H]
	\centering
	\begin{tabularx}{\linewidth}{|l|X|}
		\hline
		\multicolumn{1}{|c}{Abbreviation} & \multicolumn{1}{|c|}{Description}\\
		\hline \hline
		
		& Number of unique greedoid polynomials of rooted digraphs that \ldots\\
		\hline
		PNF & \ldots cannot be GM-factorised\\
		\hline
		PF & \ldots can be GM-factorised\\
		\hline
		PFS & \ldots can be GM-factorised and the digraph is separable\\
		\hline
		PFNS & \ldots can be GM-factorised and the digraph is non-separable\\
		\hline
		\hline
		PF & PFS $ \cup $ PFNS\\
		\hline
		COMM & PFS $ \cap $ PFNS\\
		\hline
		PFSU & PFS $ - $ COMM\\
		\hline
		PFNSU & PFNS $ - $ COMM\\
		\hline
		
		\hline
	\end{tabularx}
	\caption{Abbreviations for Figure~\ref{fig:VennDiagram_factorability} and Table~\ref{table:factorability}}
	\label{table:abbreviation_table_2}
\end{table}

\begin{figure}[H]
	\centering
	\includegraphics[scale=1]{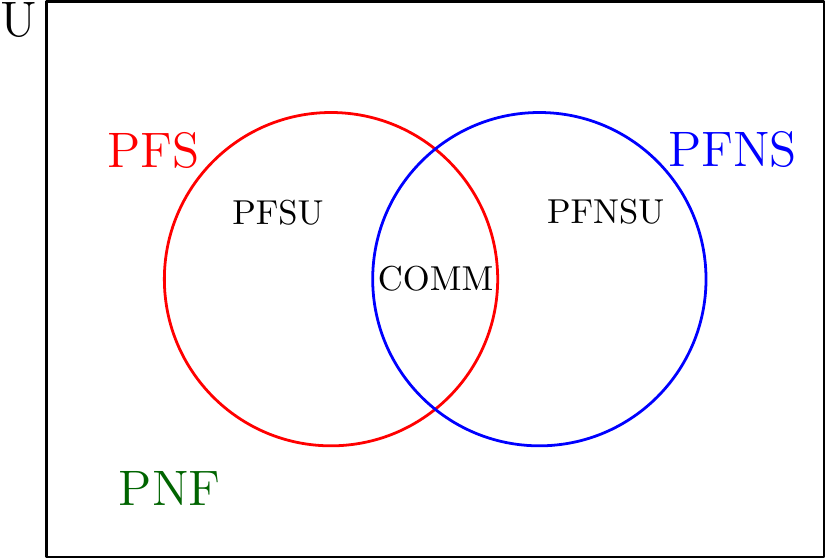}
	\caption{Venn diagram that represents the factorability of greedoid polynomials of rooted digraphs where U $ = $ PF $ \cup $ PNF and PF $ = $ PFS $ \cup $ PFNS}
	\label{fig:VennDiagram_factorability}
\end{figure}

\begin{table}[H]
	\centering
	\begin{tabular}{|c|r|r|r|r|r|r|r|}
		\hline
		\multicolumn{1}{|c}{$ n $} & \multicolumn{1}{|c}{PNF} & \multicolumn{1}{|c}{PF} & \multicolumn{1}{|c}{PFS} & \multicolumn{1}{|c}{PFNS} & \multicolumn{1}{|c}{COMM} & \multicolumn{1}{|c}{PFSU} & \multicolumn{1}{|c|}{PFNSU}\\
		\hline \hline
		
		1 & 1 & 0 & 0 & 0 & 0 & 0 & 0\\
		
		2 & 3 & 1 & 0 & 1 & 0 & 0 & 1\\
		
		3 & 6 & 16 & 6 & 13 & 3 & 3 & 10\\
		
		4 & 37 & 164 & 41 & 145 & 22 & 19 & 123\\
		
		5 & 1044 & 5092 & 444 & 4867 & 219 & 225 & 4648\\		
		\hline
	\end{tabular}
	\caption{Factorability of greedoid polynomials of rooted digraphs (up to order five)}
	\label{table:factorability}
\end{table}

We found that the ratio of PF to PU shows an upward trend, and the ratio stands at 0.8299 when $ n=5 $. It seems that most likely as $ n $ increases, the ratio will either approach 1 in which case almost all greedoid polynomials of rooted digraphs GM-factorise, or the ratio will approach a fixed $ \alpha $ where $ 0.8299 \le \alpha < 1 $. We ask, what is the limiting proportion of greedoid polynomials of rooted digraphs that GM-factorise, as $ n \rightarrow \infty $?

We categorised these polynomials into two classes, according to whether they are polynomials of separable or non-separable digraphs. Some of these polynomials are polynomials of both separable and non-separable digraphs. The number of such polynomials is given in column 6 (COMM) in Table~\ref{table:factorability}. One such example for digraphs of order three is shown in Figure~\ref{fig:dig3_COMM}, where the two digraphs have the same greedoid polynomial $ (1+t)(1+z) $.
\begin{figure}[H]
	\centering
	\resizebox{0.4\textwidth}{!}
	{%
		\begin{tikzpicture}
		[every path/.style={color=black, line width=1.2pt}, 
		every node/.style={draw, circle, line width=1.2pt, inner sep=3pt},
		%every loop/.style={min distance=2cm, looseness=30},
		bend angle=45]
		
		\node	(0) at (0,1.5)	{$ $};
		\node	(1) at (2,1.5)	{$ $};
		\node	(2) at (1,0)	{$ r $};
		\path	(0) edge [->] (1);
		\path	(2) edge [->] (1);
		\node[draw=none] at (1,-0.8)	{$ (a) $};
		
		\node	(3) at (4,1.5)	{$ $};
		\node	(4) at (6,1.5)	{$ $};
		\node	(5) at (5,0)	{$ r $};
		\path	(3) edge [->] (5);
		\path	(5) edge [->] (4);
		\node[draw=none] at (5,-0.8)	{$ (b) $};
		\end{tikzpicture}
	}%
	\caption{Digraphs that have the same greedoid polynomial where (a) is non-separable and (b) is separable}
	\label{fig:dig3_COMM}
\end{figure}
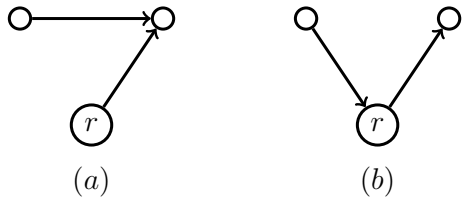

We are interested in non-separable digraphs that can be GM-factorised, especially those digraphs that have greedoid polynomials that are not the same as polynomials of any separable digraph. The numbers of greedoid polynomials of these digraphs are given in column PFNSU in Table~\ref{table:factorability}, and examples of such rooted digraphs of order two and three are given in Figure~\ref{fig:dig2_ndsf} and Figure~\ref{fig:dig3_ndsf}, respectively. It is easy to verify that the greedoid polynomial of the rooted digraph in Figure~\ref{fig:dig2_ndsf} is $ (1 + t)(1 + z) $. The greedoid polynomials of rooted digraphs in Figure~\ref{fig:dig3_ndsf} are (from left to right starting from the first row) given in Table~\ref{table:greedoid_polynomial_order_3}.
\begin{figure}[H]
	\centering
	\resizebox{0.15\textwidth}{!}
	{%
		\begin{tikzpicture}
		[every path/.style={color=black, line width=1.2pt}, 
		every node/.style={draw, circle, line width=1.2pt, inner sep=3pt},
		%every loop/.style={min distance=2cm, looseness=30},
		bend angle=45]
		
		\node	(0) at (0,0)	{$ r $};
		\node	(1) at (2,0)	{$ $}
			edge[->, bend left]		(0)
			edge[<-, bend right]	(0);
		\end{tikzpicture}
	}%
	\caption{The non-separable digraph of order two that GM-factorises}
	\label{fig:dig2_ndsf}
\end{figure}
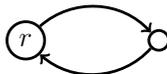

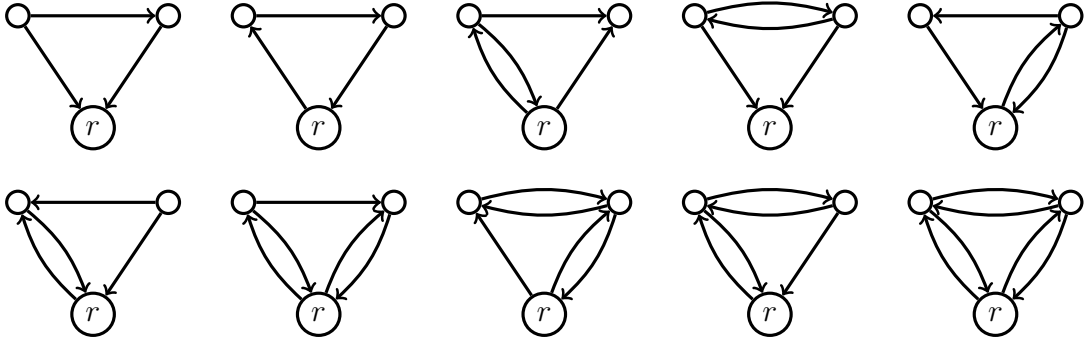
\begin{figure}[H]
	\centering
	\resizebox{0.94\textwidth}{!}
	{%
		\begin{tikzpicture}
		[every path/.style={color=black, line width=1.2pt}, 
		every node/.style={draw, circle, line width=1.2pt, inner sep=3pt},
		%every loop/.style={min distance=2cm, looseness=30},
		bend angle=15]
		
		\node	(0) at (0,1.5)	{$ $};
		\node	(1) at (2,1.5)	{$ $};
		\node	(2) at (1,0)	{$ r $};
		\path	(0) edge [->] (1);
		\path	(0) edge [->] (2);
		\path	(1) edge [->] (2);
		
		\node	(3) at (3,1.5)	{$ $};
		\node	(4) at (5,1.5)	{$ $};
		\node	(5) at (4,0)	{$ r $};
		\path	(3) edge [->] (4);
		\path	(4) edge [->] (5);
		\path	(5) edge [->] (3);
		
		\node	(6) at (6,1.5)	{$ $};
		\node	(7) at (8,1.5)	{$ $};
		\node	(8) at (7,0)	{$ r $};
		\path	(6) edge [->] (7);
		\path	(8) edge [->] (7);
		\path	(6) edge [->, bend left] (8);
		\path	(8) edge [->, bend left] (6);
		
		\node	(9) at (9,1.5)	{$ $};
		\node	(10) at (11,1.5)	{$ $};
		\node	(11) at (10,0)	{$ r $};
		\path	(9) edge [->] (11);
		\path	(10) edge [->] (11);
		\path	(9) edge [->, bend left] (10);
		\path	(10) edge [->, bend left] (9);
		
		\node	(12) at (12,1.5)	{$ $};
		\node	(13) at (14,1.5)	{$ $};
		\node	(14) at (13,0)	{$ r $};
		\path	(13) edge [->] (12);
		\path	(12) edge [->] (14);
		\path	(13) edge [->, bend left] (14);
		\path	(14) edge [->, bend left] (13);
		
		\node	(15) at (0,-1)	{$ $};
		\node	(16) at (2,-1)	{$ $};
		\node	(17) at (1,-2.5)	{$ r $};
		\path	(16) edge [->] (15);
		\path	(16) edge [->] (17);
		\path	(15) edge [->, bend left] (17);
		\path	(17) edge [->, bend left] (15);
		
		\node	(18) at (3,-1)	{$ $};
		\node	(19) at (5,-1)	{$ $};
		\node	(20) at (4,-2.5)	{$ r $};
		\path	(18) edge [->] (19);
		\path	(18) edge [->, bend left] (20);
		\path	(20) edge [->, bend left] (18);
		\path	(19) edge [->, bend left] (20);
		\path	(20) edge [->, bend left] (19);
		
		\node	(21) at (6,-1)	{$ $};
		\node	(22) at (8,-1)	{$ $};
		\node	(23) at (7,-2.5)	{$ r $};
		\path	(23) edge [->] (21);
		\path	(21) edge [->, bend left] (22);
		\path	(22) edge [->, bend left] (21);
		\path	(22) edge [->, bend left] (23);
		\path	(23) edge [->, bend left] (22);
		
		\node	(24) at (9,-1)	{$ $};
		\node	(25) at (11,-1)	{$ $};
		\node	(26) at (10,-2.5)	{$ r $};
		\path	(25) edge [->] (26);
		\path	(24) edge [->, bend left] (25);
		\path	(25) edge [->, bend left] (24);
		\path	(24) edge [->, bend left] (26);
		\path	(26) edge [->, bend left] (24);
		
		\node	(27) at (12,-1)	{$ $};
		\node	(28) at (14,-1)	{$ $};
		\node	(29) at (13,-2.5)	{$ r $};
		\path	(27) edge [->, bend left] (28);
		\path	(28) edge [->, bend left] (27);
		\path	(27) edge [->, bend left] (29);
		\path	(29) edge [->, bend left] (27);
		\path	(28) edge [->, bend left] (29);
		\path	(29) edge [->, bend left] (28);
		\end{tikzpicture}
	}%
	\caption{Ten of the 16 non-separable digraphs (one for each of the ten different greedoid polynomials) of order three that GM-factorise}
	\label{fig:dig3_ndsf}
\end{figure}

\begin{table}[H]
	\centering
	\begin{tabular}{|rl|c|}
		\hline
		\multicolumn{2}{|c|}{\multirow{2}{*}{Greedoid polynomials}} & \multicolumn{1}{c|}{Number of non-separable}\\
		&& \multicolumn{1}{c|}{rooted digraphs of order three}\\
		\hline \hline
		
		1. & $ (1 + z)^{3} $ & 2\\
		
		2. & $ (1 + z)(1 + t + t^{2} + t^{2}z) $ & 3\\
		
		3. & $ (1 + z)(2 + 2t + t^{2} + z + tz + t^{2}z) $ & 2\\
		
		4. & $ (1 + z)^{4} $ & 1\\
		
		5. & $ (1 + z)^{2}(1 + t + t^{2} + t^{2}z) $ & 3\\
		
		6. & $ (1 + t)(1 + z)^{3} $ & 1\\
		
		7. & $ (1 + z)^{2}(2 + 2t + t^{2} + z + tz + t^{2}z) $ & 1\\
		
		8. & $ (1 + z)^{2}(3 + 2t + t^{2} + z + t^{2}z) $ & 1\\
		
		9. & $ (1 + z)^{3}(1 + t + t^{2} + t^{2}z) $ & 1\\
		
		10. & $ (1 + z)^{3}(3 + 2t + t^{2} + z + t^{2}z) $ & 1\\	
		\hline
	\end{tabular}
	\caption{Greedoid polynomials of non-separable digraphs of order three that GM-factorise and these polynomials are not the same as polynomials of any separable digraph of order three, and the numbers of associated non-separable digraphs (making 16 non-separable rooted digraphs altogether)}
	\label{table:greedoid_polynomial_order_3}
\end{table}

\subsection{$ 2 $-nonbasic and $ 1 $-primary Digraphs}

We investigate greedoid polynomials that contain nonbasic and primary GM-factors. Details are given in Table~\ref{table:nonbasic_primary} (see Table~\ref{table:abbreviation_table_3} for the list of abbreviations and Figure~\ref{fig:VennDiagram_nonbasic_primary} for the corresponding Venn diagram). For rooted digraphs up to order six, each $ 1 $-primary digraph is a $ 2 $-nonbasic digraph, and each totally $ 1 $-primary digraph is a totally $ 2 $-nonbasic digraph.
\begin{table}[H]
	\centering
	\begin{tabular}{|l|l|}
		\hline
		\multicolumn{1}{|c}{Abbreviation} & \multicolumn{1}{|c|}{Description}\\
		\hline \hline
		
		2-NB & Number of $ 2 $-nonbasic digraphs\\
		\hline
		2-TNB & Number of totally $ 2 $-nonbasic digraphs\\
		\hline
		1-P & Number of $ 1 $-primary digraphs\\
		\hline
		1-TP & Number of totally $ 1 $-primary digraphs\\
				
		\hline
	\end{tabular}
	\caption{Abbreviations for Figure~\ref{fig:VennDiagram_nonbasic_primary} and Table~\ref{table:nonbasic_primary}}
	\label{table:abbreviation_table_3}
\end{table}

\begin{figure}[H]
	\centering
	\includegraphics[scale=1]{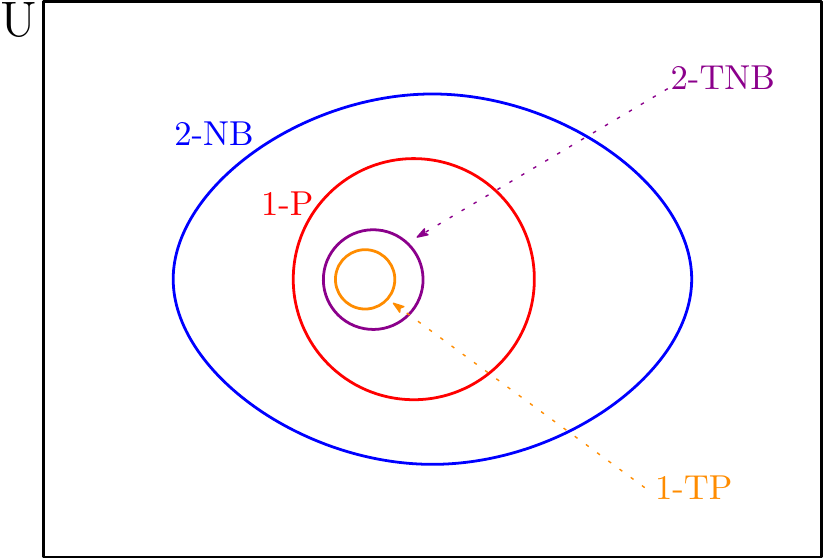}
	\caption{Venn diagram that represents four types of digraphs in Table~\ref{table:nonbasic_primary} where U is the set of digraphs (up to order six) that can be GM-factorised}
	\label{fig:VennDiagram_nonbasic_primary}
\end{figure}

\begin{table}[H]
	\centering
	\begin{tabular}{|c|r|r|r|r|}
		\hline
		\multicolumn{1}{|c}{$ n $} & \multicolumn{1}{|c}{$ 2 $-NB} & \multicolumn{1}{|c}{$ 2 $-TNB} & \multicolumn{1}{|c}{$ 1 $-P} & \multicolumn{1}{|c|}{$ 1 $-TP}\\
		\hline \hline
		
		1 & 0 & 0 & 0 & 0\\
		
		2 & 0 & 0 & 0 & 0\\
		
		3 & 0 & 0 & 0 & 0\\
		
		4 & 0 & 0 & 0 & 0\\
		
		5 & 120 & 0 & 0 & 0\\
		
		6 & 12348 & 15 & 1252 & 9\\	
		\hline
	\end{tabular}
	\caption{Numbers of the four types of non-separable digraphs (up to order six) that can be GM-factorised}
	\label{table:nonbasic_primary}
\end{table}

All rooted digraphs up to order four either have one nonbasic GM-factor or only basic GM-factors in their polynomials. There are 120 rooted digraphs of order five that have greedoid polynomials with at least two nonbasic GM-factors. The number of distinct greedoid polynomials of these 120 rooted digraphs is 34. Further examination showed that the number of nonbasic GM-factors in these polynomials is exactly two. Nonetheless, 117 of the 120 rooted digraphs have greedoid polynomials that contain at least one basic GM-factor, and the remaining three are separable digraphs (as shown in Figure~\ref{fig:order5_nonbasic_but_DS}).
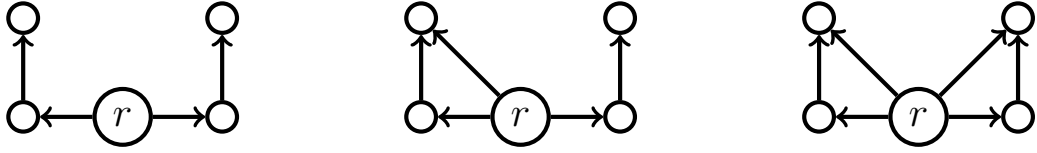
\begin{figure}[H]
	\centering
	\resizebox{0.9\textwidth}{!}
	{%
		\begin{tikzpicture}
		[every path/.style={color=black, line width=1.2pt}, 
		every node/.style={draw, circle, line width=1.2pt, inner sep=3pt},
		%every loop/.style={min distance=2cm, looseness=30},
		bend angle=45]
		
		\node	(0) at (0,0)	{$ $};
		\node	(1) at (0,1)	{$ $};
		\node	(2) at (1,0)	{$ r $};
		\node	(3) at (2,0)	{$ $};
		\node	(4) at (2,1)	{$ $};
		\path	(2) edge [->] (0);
		\path	(0) edge [->] (1);
		\path	(2) edge [->] (3);
		\path	(3) edge [->] (4);
		
		\node	(5) at (4,0)	{$ $};
		\node	(6) at (4,1)	{$ $};
		\node	(7) at (5,0)	{$ r $};
		\node	(8) at (6,0)	{$ $};
		\node	(9) at (6,1)	{$ $};
		\path	(7) edge [->] (5);
		\path	(5) edge [->] (6);
		\path	(7) edge [->] (6);
		\path	(7) edge [->] (8);
		\path	(8) edge [->] (9);
		
		\node	(10) at (8,0)	{$ $};
		\node	(11) at (8,1)	{$ $};
		\node	(12) at (9,0)	{$ r $};
		\node	(13) at (10,0)	{$ $};
		\node	(14) at (10,1)	{$ $};
		\path	(12) edge [->] (10);
		\path	(10) edge [->] (11);
		\path	(12) edge [->] (11);
		\path	(12) edge [->] (13);
		\path	(13) edge [->] (14);
		\path	(12) edge [->] (14);		
		\end{tikzpicture}
	}%
	\caption{Three separable digraphs of order five that have two nonbasic GM-factors}
	\label{fig:order5_nonbasic_but_DS}
\end{figure}
\noindent
Hence, there exist no totally $ 2 $-nonbasic digraphs of order five. In addition, none of the polynomials of these 120 rooted digraphs contains a primary GM-factor. This implies that none of the rooted digraphs up to order five are $ k $-primary, for $ k \ge 1 $. Each of the GM-factors of greedoid polynomials of rooted digraph up to order five is either basic, or is a GM-factor of some greedoid polynomials of rooted digraphs of smaller order. 

There are 12348 rooted digraphs of order six that have greedoid polynomials with at least two nonbasic GM-factors. The number of distinct greedoid polynomials of these 12348 rooted digraphs is 837. A quick search showed that all these digraphs are $ 2 $-nonbasic. We found that 15 of these rooted digraphs are totally $ 2 $-nonbasic. One of the totally $ 2 $-nonbasic digraphs $ D_{1} $ of order six is shown in Figure~\ref{fig:dig6_totally_nonbasic} and its greedoid polynomial is as follows:
\begin{equation*}
	f(D_{1}) = (1 + t + t^{2} + t^{2}z)(2 + 2t + t^{2} + t^{3} + z + tz + t^{2}z + 
	3t^{3}z + 3t^{3}z^{2} + t^{3}z^{3}).
\end{equation*}

\begin{figure}[H]
	\centering
	\resizebox{0.4\textwidth}{!}
	{%
		\begin{tikzpicture}
		[every path/.style={color=black, line width=1.2pt}, 
		every node/.style={draw, circle, line width=1.2pt, inner sep=3pt},
		%every loop/.style={min distance=2cm, looseness=30},
		bend angle=45]
		
		\node	(0) at (1.5,2)	{$ r $};
		\node	(1) at (0,1)	{$ $};
		\node	(2) at (1.5,0)	{$ $};
		\node	(3) at (3,1)	{$ $};
		\node	(4) at (4.5,1)	{$ $};
		\node	(5) at (4.5,0)	{$ $};
		\path	(0) edge [->] (1);
		\path	(0) edge [->] (3);
		\path	(1) edge [->] (2);
		\path	(3) edge [->] (2);
		\path	(3) edge [->] (4);
		\path	(4) edge [->] (5);
		\end{tikzpicture}
	}%
	\caption{A totally $ 2 $-nonbasic digraph of order six}
	\label{fig:dig6_totally_nonbasic}
\end{figure}
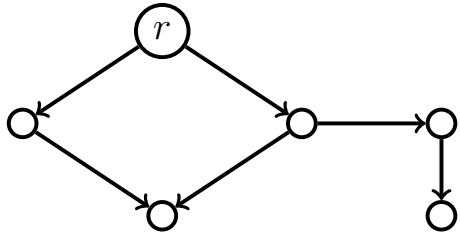
\noindent
Both of the nonbasic GM-factors of $ f(D_{1}) $ are greedoid polynomials of rooted digraphs $ G $ and $ H $ that have order three and four, respectively (see Figure~\ref{fig:digraphs_smaller_order}). We have $ f(G) = 1 + t + t^{2} + t^{2}z $ and $ f(H) = 2 + 2t + t^{2} + t^{3} + z + tz + t^{2}z + 3t^{3}z + 3t^{3}z^{2} + t^{3}z^{3} $. However, $ D_{1} $ is a non-separable digraph and hence not the direct sum of $ G $ and $ H $.
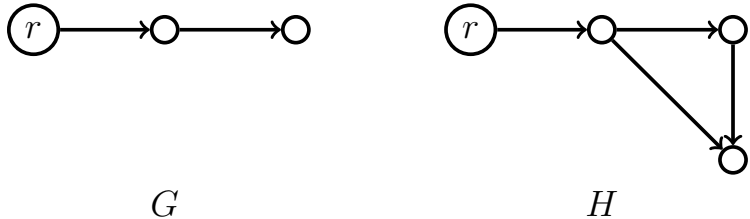
\begin{figure}[H]
	\centering
	\resizebox{0.65\textwidth}{!}
	{%
		\begin{tikzpicture}
		[every path/.style={color=black, line width=1.2pt}, 
		every node/.style={draw, circle, line width=1.2pt, inner sep=3pt},
		%every loop/.style={min distance=2cm, looseness=30},
		bend angle=45]
		
		\node	(0) at (0,2)	{$ r $};
		\node	(1) at (1.5,2)	{$ $};
		\node	(2) at (3,2)	{$ $};
		\path	(0) edge [->] (1);
		\path	(1) edge [->] (2);
		\node[draw=none]	() at (1.5,0)	{$ G $};
		
		\node	(3) at (5,2)	{$ r $};
		\node	(4) at (6.5,2)	{$ $};
		\node	(5) at (8,2)	{$ $};
		\node	(6) at (8,0.5)	{$ $};
		\path	(3) edge [->] (4);
		\path	(4) edge [->] (5);
		\path	(4) edge [->] (6);
		\path	(5) edge [->] (6);
		\node[draw=none]	() at (6.5,0)	{$ H $};
		
		\end{tikzpicture}
	}%
	\caption{Rooted digraphs $ G $ and $ H $}
	\label{fig:digraphs_smaller_order}
\end{figure}

There are also 1252 rooted digraphs of order six that have greedoid polynomials with one primary GM-factor, and all these digraphs are non-separable. However, only nine of them are totally $ 1 $-primary digraphs. One of the totally $ 1 $-primary digraphs $ D_{2} $ of order six is shown in Figure~\ref{fig:dig6_totally_primary} and it has the following greedoid polynomial:
\begin{equation*}\label{eqn:totally_primaryPoly}
	f(D_{2}) = (1 + t + t^{2} + t^{2}z)(4 + 3t + t^{2} + t^{3} + 4z + 2tz + t^{2}z + 4t^{3}z + z^{2} + 6t^{3}z^{2} + 4t^{3}z^{3} + t^{3}z^{4}).
\end{equation*}

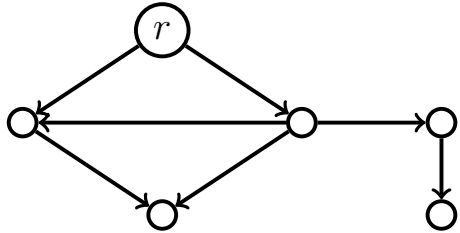
\begin{figure}[H]
	\centering
	\resizebox{0.4\textwidth}{!}
	{%
		\begin{tikzpicture}
		[every path/.style={color=black, line width=1.2pt}, 
		every node/.style={draw, circle, line width=1.2pt, inner sep=3pt},
		%every loop/.style={min distance=2cm, looseness=30},
		bend angle=45]
		
		\node	(0) at (1.5,2)	{$ r $};
		\node	(1) at (0,1)	{$ $};
		\node	(2) at (1.5,0)	{$ $};
		\node	(3) at (3,1)	{$ $};
		\node	(4) at (4.5,1)	{$ $};
		\node	(5) at (4.5,0)	{$ $};
		\path	(0) edge [->] (1);
		\path	(0) edge [->] (3);
		\path	(3) edge [->] (1);
		\path	(1) edge [->] (2);
		\path	(3) edge [->] (2);
		\path	(3) edge [->] (4);
		\path	(4) edge [->] (5);
		\end{tikzpicture}
	}%
	\caption{A totally $ 1 $-primary digraph of order six}
	\label{fig:dig6_totally_primary}
\end{figure}
\noindent
The totally $ 1 $-primary digraph $ D_{2} $ GM-factorises into one nonbasic GM-factor $ 1 + t + t^{2} + t^{2}z $ and one primary GM-factor $ 4 + 3t + t^{2} + t^{3} + 4z + 2tz + t^{2}z + 4t^{3}z + z^{2} + 6t^{3}z^{2} + 4t^{3}z^{3} + t^{3}z^{4} $. The GM-factor $ 1 + t + t^{2} + t^{2}z $ is not primary as it is the greedoid polynomial of the rooted digraph $ G $ in Figure~\ref{fig:digraphs_smaller_order}. Note that $ D_{2} $ is also a totally $ 2 $-nonbasic digraph since every primary GM-factor is a nonbasic GM-factor.

The fact that a greedoid polynomial of a rooted digraph is not divisible by $ 1 + z $ implies that the associated rooted digraph has neither a directed cycle nor a greedoid loop. Our results show that there exist some non-separable digraphs (of order six) that GM-factorise into only nonbasic GM-factors, or both nonbasic and primary GM-factors. This implies that the multiplicative direct sum property, and the existence of greedoid loops and directed cycles, are not the only characteristics that determine if greedoid polynomials of rooted digraphs factorise.

\subsection{An Infinite Family}

Lastly, we show that there exists an infinite family of digraphs where their greedoid polynomials factorise into at least two nonbasic GM-factors. We first characterise greedoid polynomials of two classes of rooted digraphs.

Let $ P_{m,v_{0}} $ be a \emph{directed path} $ v_{0}v_{1}\ldots v_{m} $ of size $ m \ge 0 $ \emph{rooted} at $ v_{0} $, and $ C_{m,v_{0}} $ be a \emph{directed cycle} $ v_{0}v_{1}\ldots v_{m-1}v_{0} $ of size $ m \ge 1 $ \emph{rooted} at $ v_{0} $. For convenience, we usually write $ P_{m} $ for $ P_{m,v_{0}} $ and $ C_{m} $ for $ C_{m,v_{0}} $.
\begin{lemma}\label{lem:directed_path}
	\[ f(P_{m};t,z) = 1 + \frac{t(1-(t(1+z))^{m})}{1-t(1+z)}. \]
\end{lemma}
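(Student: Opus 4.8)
The plan is to prove the identity by induction on $m$, using the deletion-contraction recurrence of Proposition~\ref{pro:recurrence} as the engine and finishing with a geometric-series evaluation. Before invoking the recurrence, I would first pin down the greedoid structure of $P_m$. Since the path is rooted at $v_0$ and every edge $e_i = v_{i-1}v_i$ points away from the root, the only arborescences (feasible sets) are the prefixes $\{e_1,\ldots,e_k\}$ for $0 \le k \le m$. In particular $r(P_m) = m$, and for an arbitrary $A \subseteq E(P_m)$ the rank $r(A)$ equals the length of the longest initial run $e_1, e_2, \ldots$ contained in $A$.

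Next I would apply Proposition~\ref{pro:recurrence} with $e = e_1$, the unique outgoing edge at the root. Contracting $e_1$ merges $v_0$ and $v_1$ and yields the rooted directed path $P_{m-1}$, so $f(P_m/e_1) = f(P_{m-1})$. Deleting $e_1$ disconnects $v_1,\ldots,v_m$ from the root, so each of $e_2,\ldots,e_m$ belongs to no directed path from $v_0$ and is therefore a greedoid loop; hence $r(P_m\setminus e_1) = 0$ and every subset $A$ of the $m-1$ remaining edges satisfies $r(A) = 0$, giving $f(P_m\setminus e_1) = \sum_{A} z^{\abs{A}} = (1+z)^{m-1}$. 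Since $r(P_m) - r(P_m\setminus e_1) = m$, the recurrence reads
\[
f(P_m) = f(P_{m-1}) + t^m(1+z)^{m-1}.
\]

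With base case $f(P_0) = 1$ (a single rooted vertex, whose only subset is empty), unwinding this recurrence gives $f(P_m) = 1 + \sum_{k=1}^m t^k(1+z)^{k-1}$. The final step is to sum the geometric series $\sum_{k=1}^m (t(1+z))^{k-1} = \frac{1 - (t(1+z))^m}{1 - t(1+z)}$, which yields the claimed closed form. Alternatively, one can bypass the induction entirely by summing directly over all subsets $A$, grouped by $j = r(A)$: for $j < m$ the subsets of rank $j$ are exactly $\{e_1,\ldots,e_j\} \cup B$ with $B \subseteq \{e_{j+2},\ldots,e_m\}$ (the constraint being $e_{j+1}\notin A$), contributing $t^{m-j}(1+z)^{m-j-1}$, while $j = m$ contributes $1$; reindexing produces the same sum.

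There is no serious obstacle here, as the computation is routine. The only points requiring care are verifying that deleting $e_1$ genuinely turns $e_2,\ldots,e_m$ into greedoid loops (so that $f(P_m\setminus e_1) = (1+z)^{m-1}$ and the rank exponent is exactly $t^m$), and, in the direct-summation variant, correctly enforcing the implicit constraint $e_{j+1}\notin A$ when counting subsets of a given rank. Once those are handled, the result follows from the standard geometric-series identity.
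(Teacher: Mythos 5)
Your proof is correct and follows essentially the same route as the paper's: induction on $m$ via the deletion--contraction recurrence of Proposition~\ref{pro:recurrence} applied to the root's outgoing edge, with $f(P_m/e)=f(P_{m-1})$, the observation that deletion turns the remaining $m-1$ edges into greedoid loops so that $f(P_m\setminus e)=(1+z)^{m-1}$ and the exponent is $t^m$, and a final geometric-series summation. The direct-summation variant you sketch is a nice bonus but not needed.
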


\begin{proof}
	By induction on the number of edges.
	\begin{comment}
	We proceed by induction on the number $ m $ of edges of $ P_{m} $. For the base case, suppose $ m=0 $. We have $ f(P_{0})=1 $ and the result for $ m=0 $ follows. Assume that $ m > 0 $ and the result holds for every rooted directed path of size less than $ m $. Let $ e $ be the outgoing edge of the root vertex of $ P_{m} $. By Proposition~\ref{pro:recurrence}, we have	
	\begin{equation*}\label{eqn:recurrence_path}
		f(P_{m}) = f(P_{m}/e) + t^{r(P_{m})-r(P_{m}\setminus e)}f(P_{m}\setminus e).
	\end{equation*}
	Since $ P_{m}/e $ is a rooted directed path of size $ m-1 $, we apply the inductive hypothesis to obtain $ f(P_{m}/e) $. As every edge in $ P_{m}\setminus e $ is in no feasible sets (hence they all are greedoid loops) and $ \abs{P_{m}\setminus e}=m-1 $, we have $ r(P_{m}\setminus e)=0 $ and $ f(P_{m}\setminus e) = (1 + z)^{m-1} $. Therefore,
	\begin{align*}
		f(P_{m}) & = f(P_{m}/e) + t^{r(P_{m})-r(P_{m}\setminus e)}f(P_{m}\setminus e)\\
			& = \left( 1 + \frac{t(1-(t(1+z))^{m-1})}{1-t(1+z)} \right) + t^{m-0}(1 + z)^{m-1}\\
			& = 1 + \frac{t(1-(t(1+z))^{m})}{1-t(1+z)}.
	\end{align*}
	\end{comment}
\end{proof}

Let $ G $ be a rooted undirected graph and $ X \subseteq E(G) $. The \emph{rank} $ r(X) $ of $ X $ is defined as $ r(X)=\hbox{max}\{\abs{A}:A \subseteq X, A \text{ is a rooted subtree}\} $. Let $ F $ be the set of subtrees of $ G $ containing the root vertex. Korte and Lov\'{a}sz~\cite{KorteLovasz1984} showed that $ (G,F) $ is a greedoid called the \emph{branching greedoid} of $ G $.

Suppose $ Q_{m} $ is an \emph{undirected path} $ v_{0}v_{1}\ldots v_{m} $ of size $ m \ge 0 $ \emph{rooted} at either $ v_{0} $ or $ v_{m} $. Then $ f(P_{m};t,z) = f(Q_{m};t,z) $, since there is a rank-preserving bijection between $ 2^{E(P_{m})} $ and $ 2^{E(Q_{m})} $.

\begin{lemma}\label{lem:directed_cycle}
	\[ f(C_{m};t,z) = (1 + z) f(P_{m-1};t,z). \]
\end{lemma}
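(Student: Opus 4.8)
The plan is to single out the one edge of $C_m$ that points back into the root and recognise it as a greedoid loop, so that the cycle's greedoid polynomial splits off a clean factor of $1+z$. Write $C_m$ as the directed cycle $v_0 v_1 \ldots v_{m-1} v_0$ rooted at $v_0$, and let $e$ be the edge $v_{m-1} v_0$ directed into the root.

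First I would argue that $e$ is a greedoid loop. Every feasible set of the directed branching greedoid is an arborescence rooted at $v_0$, and in such an arborescence every edge incident with $v_0$ is outgoing; hence no feasible set can contain the incoming edge $e$, so by definition $e$ lies in no feasible set. Next I would identify $C_m \setminus e$: deleting $e$ from the cycle leaves exactly the edges $v_0 v_1, \ldots, v_{m-2} v_{m-1}$, that is, the rooted directed path $P_{m-1}$.

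The crux is then the loop-multiplicativity statement: if $e$ is a greedoid loop of a rooted digraph $D$, then $f(D;t,z) = (1+z)\, f(D \setminus e;t,z)$. I would establish this either by invoking the multiplicative direct sum property, since $D$ is the direct sum of $D \setminus e$ and the one-element greedoid on the loop $e$ (whose polynomial is $1+z$), or, more self-containedly, by partitioning the sum defining $f$ according to whether a subset $A \subseteq E(D)$ contains $e$. Because $e$ is in no feasible set, $r(A) = r(A \setminus \{e\})$ for every $A$, and in particular $r(D) = r(D \setminus e)$; the subsets avoiding $e$ contribute exactly $f(D \setminus e)$, while each subset $A = B \cup \{e\}$ with $e \notin B$ contributes $z$ times the term of $B$, giving $z\, f(D \setminus e)$. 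Summing the two groups yields the factor $1+z$. Applying this with $D = C_m$ and $D \setminus e = P_{m-1}$ gives $f(C_m) = (1+z)\, f(P_{m-1})$ as required.

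I expect the main obstacle to be bookkeeping rather than conceptual: verifying the rank identity $r(A) = r(A \setminus \{e\})$ for all $A$ (equivalently, that deleting a greedoid loop changes neither the global rank nor any subset rank) so that the two groups of terms genuinely share the exponents appearing in $f(P_{m-1})$, and checking the degenerate case $m = 1$, where $C_1$ is an ordinary directed loop at the root and $C_1 \setminus e = P_0$ is a single rooted vertex with $f(P_0) = 1$, so that the formula correctly returns $f(C_1) = 1+z$.
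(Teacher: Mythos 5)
Your argument is correct, but it is not the route the paper takes. The paper proves the lemma by induction on $m$: it applies the deletion--contraction recurrence of Proposition~\ref{pro:recurrence} to the \emph{outgoing} edge $e$ of the root, notes that $C_{m}/e \cong C_{m-1}$ while every edge of $C_{m}\setminus e$ is a greedoid loop (so $f(C_{m}\setminus e)=(1+z)^{m-1}$ and the rank drops to $0$), and then combines the inductive hypothesis with the closed form of Lemma~\ref{lem:directed_path} to reassemble $(1+z)f(P_{m-1})$. You instead isolate the single edge $v_{m-1}v_{0}$ directed \emph{into} the root, observe that it lies in no feasible set (no arborescence rooted at $v_{0}$ contains an edge entering $v_{0}$), and prove a general loop-factorisation identity $f(D)=(1+z)f(D\setminus e)$ for any greedoid loop $e$ by splitting the defining sum over subsets according to membership of $e$ and using $r(A)=r(A\setminus\{e\})$; since $C_{m}\setminus e = P_{m-1}$, the lemma follows in one step. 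Your approach is non-inductive, does not need the explicit formula of Lemma~\ref{lem:directed_path} at all, and yields a reusable general fact (equivalently an instance of the multiplicative direct sum property, since $C_m$ is the direct sum of $P_{m-1}$ and the one-edge digraph on the greedoid loop); the paper's approach buys uniformity, since the same deletion--contraction induction is the template for Lemma~\ref{lem:directed_path}, Lemma~\ref{lem:infinite_family} and Theorem~\ref{thm:infinite_family}. Your treatment of the degenerate case $m=1$ (an ordinary directed loop at the root, $C_{1}\setminus e = P_{0}$, $f(P_0)=1$) is also sound.
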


\begin{proof}
	By induction on the number of edges.
	\begin{comment}
	We proceed by induction on the number $ m $ of edges of $ C_{m} $. For the base case, suppose $ m=1 $. We have $ f(C_{1})=(1 + z) f(P_{0})=1 + z $ and the result for $ m=1 $ follows. Assume that $ m > 1 $ and the result holds for every rooted directed cycle of size less than $ m $. Let $ e $ be the outgoing edge of the root vertex of $ C_{m} $. By Proposition~\ref{pro:recurrence}, we have
	\begin{equation*}\label{eqn:recurrence_cycle}
		f(C_{m}) = f(C_{m}/e) + t^{r(C_{m})-r(C_{m}\setminus e)}f(C_{m}\setminus e).
	\end{equation*}
	Since $ C_{m}/e $ is is a rooted directed cycle of size $ m-1 $, we apply the inductive hypothesis to obtain $ f(C_{m}/e) $. As every edge in $ C_{m}\setminus e $ is in no feasible sets (hence they all are greedoid loops) and $ \abs{C_{m}\setminus e}=m-1 $, we have $ r(C_{m}\setminus e)=0 $ and $ f(C_{m}\setminus e) = (1 + z)^{m-1} $. Therefore,
	\begin{align*}
		f(C_{m}) & = f(C_{m}/e) + t^{r(C_{m})-r(C_{m}\setminus e)}f(C_{m}\setminus e)\\
			& = (1 + z) f(P_{m-2}) + t^{(m-1)-0}(1 + z)^{m-1}\\
			& = (1 + z) \left( f(P_{m-2}) + t^{m-1}(1 + z)^{m-2} \right)\\
			%& = (1 + z) \left( \left( 1 + \sum_{i=1}^{m-2} (1+z)^{i-1} t^{i} \right) + t^{m-1}(1 + z)^{m-2} \right) \tag{by Lemma~\ref{lem:directed_path}}\\
			%& = (1 + z) \left( 1 + \sum_{i=1}^{m-1} (1+z)^{i-1} t^{i} \right)\\
			& = (1 + z) f(P_{m-1}) \tag[.]{by Lemma~\ref{lem:directed_path}}
	\end{align*}
	%Since the only incoming edge $ e $ of the root vertex $ v $ of $ C_{m} $ is a greedoid loop, and the removal of $ e $ from $ C_{m} $ leaves a directed path of size $ m-1 $ rooted at $ v $, the result follows.
	\end{comment}
\end{proof}

Gordon gave a formula for the greedoid polynomials of rooted undirected cycles in~\cite{Gordon2008}. Those polynomials are different to the ones given by Lemma~\ref{lem:directed_cycle}.

We now give an infinite family of digraphs where their greedoid polynomials factorise into at least two nonbasic GM-factors, extending the example in Figure~\ref{fig:dig6_totally_nonbasic}.

\begin{lemma}\label{lem:infinite_family}
	There exists an infinite family of non-separable digraphs $ D $ that have at least two nonbasic GM-factors, where
	\begin{equation*}
		f(D) = f(P_{k+1}) \left( f(C_{k+1}) + f(P_{k+1}) + t^{k+2}(1+z)^{k+2} \right) ,\ \text{for}\ k \ge 1.
	\end{equation*}
\end{lemma}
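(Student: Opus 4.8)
The plan is to exhibit the family explicitly, compute its greedoid polynomial by peeling one path with Proposition~\ref{pro:recurrence}, and then read off two nonbasic factors. For $k \ge 1$ let $D_k$ be the rooted digraph on $2k+4$ vertices built from a \emph{long} directed path $r \to \alpha_1 \to \cdots \to \alpha_k \to w$ of length $k+1$, a \emph{short} directed path $r \to \beta_1 \to w$ of length $2$ sharing the root $r$ and the vertex $w$ with the long path, together with a pendant directed path $\beta_1 \to c_1 \to \cdots \to c_{k+1}$ of length $k+1$ attached at $\beta_1$. For $k=1$ this is exactly the digraph of Figure~\ref{fig:dig6_totally_nonbasic}. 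Throughout I abbreviate $s = t(1+z)$ and $\sigma_k = 1 + s + \cdots + s^k$, so that Lemma~\ref{lem:directed_path} reads $f(P_m) = 1 + t\,\sigma_{m-1}$ and Lemma~\ref{lem:directed_cycle} gives $f(C_m) = (1+z)f(P_{m-1}) = (1+z) + s\,\sigma_{m-2}$.

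First I would compute $f(D_k)$ by repeatedly applying Proposition~\ref{pro:recurrence} to the root edge of the long path, writing $D[i]$ for the intermediate digraph in which the long path has been shortened to length $i$ (so $D[k+1] = D_k$). Contracting that root edge produces $D[i-1]$, while deleting it detaches the long path, turning its remaining $i-1$ edges into greedoid loops and leaving the tree $R$ formed by the short path and the pendant; tracking the rank drop $r(D[i]) - r(D[i]\setminus e) = i-1$ shows each deletion contributes $s^{i-1}f(R)$. When the long path is fully consumed the shared vertex $w$ merges into $r$, the short path closes into a digon, and we are left with $D[0]$, a $2$-cycle at $r$ carrying the pendant. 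I evaluate $f(R)$ and $f(D[0])$ with the multiplicative direct-sum property: contracting the relevant root edge makes the surviving pieces pendant \emph{at the root}, hence a genuine direct sum, which with Lemmas~\ref{lem:directed_path} and \ref{lem:directed_cycle} gives $f(R) = (1+t)f(P_{k+1}) + t\,s^{k+2}$ and $f(D[0]) = (1+z)f(P_{k+1}) + s^{k+2}$. Summing the geometric series $\sum_{i=1}^{k+1} s^{i-1} = \sigma_k$ then yields
\begin{equation*}
	f(D_k) = f(D[0]) + \sigma_k\, f(R) = f(P_{k+1})\bigl[(1+z) + (1+t)\sigma_k + s^{k+2}\bigr],
\end{equation*}
the prefactor appearing because $1 + t\,\sigma_k = f(P_{k+1})$.

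It then remains to identify the bracket with $g := f(C_{k+1}) + f(P_{k+1}) + t^{k+2}(1+z)^{k+2}$. Since $f(C_{k+1}) + f(P_{k+1}) = (1+z) + s\,\sigma_{k-1} + 1 + t\,\sigma_k$, the difference between the bracket and $g$ reduces to $\sigma_k - 1 - s\,\sigma_{k-1}$, which vanishes by the elementary identity $\sigma_k - 1 = s\,\sigma_{k-1}$; hence $f(D_k) = f(P_{k+1})\,g$ as claimed. To confirm that $g$ is genuinely a GM-factor I would record that $g = f(H_k)$, where $H_k$ is the rooted digraph $r \to u$ followed by two internally disjoint paths from $u$ to a common vertex of lengths $1$ and $k+1$ (Figure~\ref{fig:digraphs_smaller_order} shows $H_1$); the same deletion--contraction, contracting $r \to u$ to a diamond and then its short leg to a cycle, gives $f(H_k) = f(C_{k+1}) + f(P_{k+1}) + s^{k+2}$. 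Both factors are nonbasic: setting $z=-1$ gives $f(P_{k+1}) = 1+t \ne 0$ and $g = 1+t \ne 0$, so neither is divisible by $1+z$; moreover $f(P_{k+1})$ contains the monomial $t^2z$ and $g$ has constant term $2$, so neither equals $1+t$ or $1+z$. Thus $f(D_k) = f(P_{k+1})\,f(H_k)$ exhibits at least two nonbasic GM-factors.

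Finally I would verify non-separability. The crux is the vertex $w$, which has two in-edges $\alpha_k \to w$ and $\beta_1 \to w$. Suppose $D_k$ admitted a nontrivial partition of its edge set into $E(G)$ and $E(H)$ realising a direct sum. Using the two spanning arborescences that reach $w$ through the long and short paths respectively, one sees that $\alpha_k \to w$ and $\beta_1 \to w$ must lie in the same summand (otherwise a union of feasible sets of $G$ and $H$ would give $w$ in-degree two, which is not an arborescence). Reachability then forces the whole long path, the short path, and the pendant into that summand as well, since every edge lies in a feasible set only when the edge $r \to \beta_1$ (or $r\to\alpha_1$) reaching it is present in the same summand; this leaves the other summand empty, a contradiction. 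The hard part will be the bookkeeping in the peeling step---correctly counting the greedoid loops created by each deletion and the rank differences $r(D[i]) - r(D[i]\setminus e)$ that become exponents of $t$, since an off-by-one there changes a power of $1+z$ and destroys the factorisation. Once those exponents are pinned down, the direct-sum evaluations of $f(R)$ and $f(D[0])$ and the collapse via $\sigma_k - 1 = s\,\sigma_{k-1}$ are routine.
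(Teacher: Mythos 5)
Your proof is correct, and your family is exactly the digraph of Figure~\ref{fig:digraph_infinite_family} (your $w,\beta_1,c_j$ are the paper's $a_k,v_1,b_{j-1}$), but your computation takes a genuinely different route. The paper applies Proposition~\ref{pro:recurrence} only once to the whole digraph, pivoting on the short-path edge $e=r\to\beta_1$ (the paper's $v_0v_1$): the contraction is separable, with the pendant becoming a direct summand isomorphic to $P_{k+1}$ alongside the ``diamond'' $A$, while the deletion leaves a copy of $P_{k+1}$ plus exactly $k+2$ greedoid loops; the common factor $f(P_{k+1})$ is therefore visible after a single step, and one further deletion--contraction inside $A$ (on the chord $v_0a_k$) yields the cofactor $f(C_{k+1})+f(P_{k+1})+t^{k+2}(1+z)^{k+2}$ with no series to sum. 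Your peeling of the long path instead needs $k+1$ recurrence steps, the geometric series $\sigma_k$, separate evaluations of $f(R)$ and $f(D[0])$, and the identity $\sigma_k-1=s\,\sigma_{k-1}$ to reassemble the cofactor; I checked the rank drops $i-1$, $k+2$ and $k+3$ and the two direct-sum evaluations, and they are all right, but the bookkeeping you flag as the delicate part is precisely what the paper's choice of pivot edge avoids. What your route buys is two details the paper leaves implicit: you exhibit an explicit digraph $H_k$ of order $k+3\le 2k+4$ whose greedoid polynomial equals the cofactor, which is needed for that cofactor to be a GM-factor in the sense of the definition, and you sketch the non-separability argument (the in-degree-two vertex $w$ forces both of its in-edges into one summand, and reachability then absorbs every edge), which the paper simply asserts.
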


\begin{proof}
	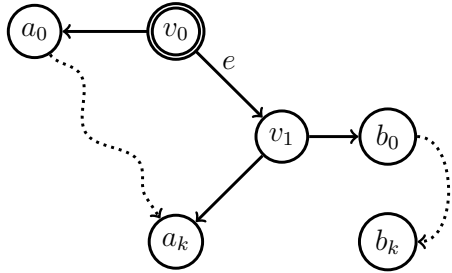
\begin{figure}[H]
		\centering
		\resizebox{0.4\textwidth}{!}
		{%
			\begin{tikzpicture}
			[every path/.style={color=black, line width=1.2pt}, 
			every node/.style={draw, circle, line width=1.2pt, inner sep=3pt},
			%every loop/.style={min distance=2cm, looseness=30},
			bend angle=45]
			
			\node[double]	(0) at (1.5,3)	{$ v_{0} $};
			\node	(1) at (-0.5,3)	{$ a_{0} $};
			\node	(2) at (1.5,0)	{$ a_{k} $};
			\node	(3) at (3,1.5)	{$ v_{1} $};
			\node	(4) at (4.5,1.5)	{$ b_{0} $};
			\node	(5) at (4.5,0)	{$ b_{k} $};
			\path	(0) edge [->] (1);
			\path	(0) edge [->] node [above, draw=none] {$ e $} (3);
			\path[dotted]	(1) edge [->,decorate,decoration={snake,amplitude=2mm,segment length=15mm}] (2);
			\path	(3) edge [->] (2);
			\path	(3) edge [->] (4);
			\path[dotted]	(4) edge [->,out=0,in=0] (5);
			\end{tikzpicture}
		}%
		\caption{The digraph $ D $ in the proof of Lemma~\ref{lem:infinite_family}}
		\label{fig:digraph_infinite_family}
	\end{figure}

	Let $ D $ be the non-separable digraph rooted at vertex $ v_{0} $ shown in Figure~\ref{fig:digraph_infinite_family}, where $ a_{0} \ldots a_{k} $ and $ b_{0} \ldots b_{k} $ are two directed paths in $ D $ of length $ k \ge 1 $ starting at $ a_{0} $ and $ b_{0} $, respectively. To compute the greedoid polynomial of $ D $ by using Proposition~\ref{pro:recurrence}, we first choose the edge $ e=v_{0}v_{1} $. By deleting and contracting $ e $, we obtain the digraphs $ D_{1}=D/e $ and $ D_{2}=D\setminus e $ as shown in Figure~\ref{fig:deletion_contraction}.
	\begin{figure}[t]
		\centering
		\resizebox{0.8\textwidth}{!}
		{%
			\begin{tikzpicture}
			[every path/.style={color=black, line width=1.2pt}, 
			every node/.style={draw, circle, line width=1.2pt, inner sep=3pt},
			%every loop/.style={min distance=2cm, looseness=30},
			bend angle=45]
			
			\node	(1) at (-0.5,3)	{$ a_{0} $};
			\node	(2) at (1.5,0)	{$ a_{k} $};
			\node[double]	(3) at (2.2,2.5)	{$  v_{0} $};
			\node	(4) at (3.7,2.5)	{$  b_{0} $};
			\node	(5) at (3.7,1)	{$  b_{k} $};
			\path	(3) edge [->] (1);
			\path[dotted]	(1) edge [->,decorate,decoration={snake,amplitude=2mm,segment length=15mm}] (2);
			\path	(3) edge [->] (2);
			\path	(3) edge [->] (4);
			\path[dotted]	(4) edge [->,out=0,in=0] (5);
			\node[draw=none]	() at (1.8,-1.2)	{$ (a)\ D_{1}=D/e $};
			
			\node[double]	(6) at (8.5,3)	{$  v_{0} $};
			\node	(7) at (6.5,3)	{$  a_{0} $};
			\node	(8) at (8.5,0)	{$  a_{k} $};
			\node	(9) at (10,1.5)	{$  v_{1} $};
			\node	(10) at (11.5,1.5)	{$  b_{0} $};
			\node	(11) at (11.5,0)	{$  b_{k} $};
			\path	(6) edge [->] (7);
			\path[dotted]	(7) edge [->,decorate,decoration={snake,amplitude=2mm,segment length=15mm}] (8);
			\path	(9) edge [->] (8);
			\path	(9) edge [->] (10);
			\path[dotted]	(10) edge [->,out=0,in=0] (11);
			\node[draw=none]	() at (9,-1.2)	{$ (b)\ D_{2}=D\setminus e $};
			\end{tikzpicture}
		}%
		\vspace{-1cm}
		\caption{Two minors $ D/e $ and $ D\setminus e $ of $ D $}
		\label{fig:deletion_contraction}
	\end{figure}
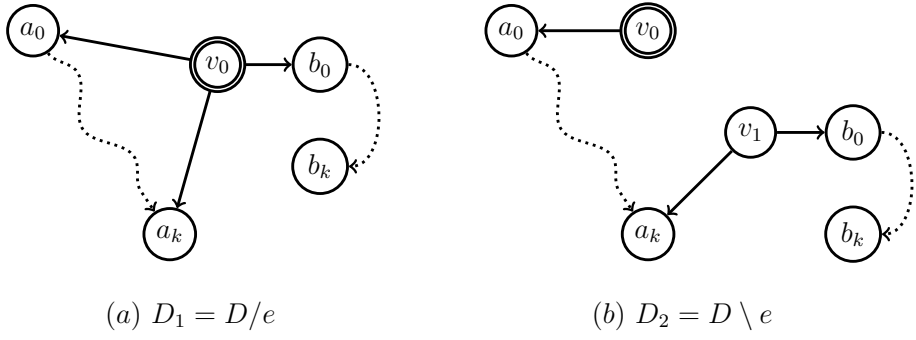
	
	Note that $ D_{1} $ is a separable digraph rooted at $ v_{0} $. Let $ R=\{v_{0},a_{0},\ldots,a_{k}\} \subset V(D_{1}) $, $ S=\{v_{0},b_{0},\ldots,b_{k}\} \subset V(D_{1}) $ and $ T=\{v_{0},a_{0},\ldots,a_{k}\} \subset V(D_{2}) $. Suppose $ A=D_{1}[R] $ and $ B=D_{1}[S] $ are the subdigraphs of $ D_{1} $ induced by $ R $ and $ S $ respectively, and $ C=D_{2}[T] $ is the subdigraph of $ D_{2} $ induced by $ T $. Clearly, $ B \cong C \cong P_{k+1} $. Hence we have $ f(B) = f(C) = f(P_{k+1}) $. Note that every edge $ g \in E(D_{2})\setminus E(C) $ is a greedoid loop, and $ \abs{E(D_{2})\setminus E(C)} = k+2 $. By using the recurrence formula, we have
	\begin{align*}
		f(D) & = f(D/e) + t^{r(D)-r(D\setminus e)}f(D\setminus e)\\
			& = f(A) \cdot f(B) + t^{(2k+3)-(k+1)} f(C) \cdot (1+z)^{k+2}\\
			& = f(P_{k+1}) \left( f(A) + t^{k+2}(1+z)^{k+2} \right) \tag[.]{since $ f(B)=f(C)=f(P_{k+1}) $}
	\end{align*}
	
	It remains to show that $ f(A) $ can be expressed in terms of $ f(P_{k}) $ and $ f(C_{k}) $. By taking $ h=v_{0}a_{k} \in E(A) $ (see Figure~\ref{fig:induced_subdigraph}) as the outgoing edge in the recurrence formula, we have
	\begin{figure}[H]
		\centering
		\resizebox{0.2\textwidth}{!}
		{%
			\begin{tikzpicture}
			[every path/.style={color=black, line width=1.2pt}, 
			every node/.style={draw, circle, line width=1.2pt, inner sep=3pt},
			%every loop/.style={min distance=2cm, looseness=30},
			bend angle=45]
			
			\node	(1) at (-0.5,3)	{$ a_{0} $};
			\node	(2) at (1.5,0)	{$ a_{k} $};
			\node[double]	(3) at (2.2,2.5)	{$  v_{0} $};
			\path	(3) edge [->] (1);
			\path[dotted]	(1) edge [->,decorate,decoration={snake,amplitude=2mm,segment length=15mm}] (2);
			\path	(3) edge [->] node [right, draw=none] {$ h $} (2);
			\end{tikzpicture}
		}%
		\caption{The subdigraph $ A $ of $ D_{1} $ induced by $ R $}
		\label{fig:induced_subdigraph}
	\end{figure}
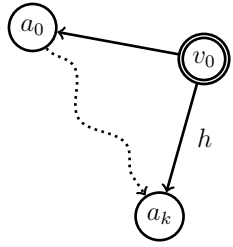
	
	\begin{align*}
		f(A) & = f(A/h) + t^{r(A)-r(A\setminus h)} f(A\setminus h)\\
			& = f(C_{k+1}) + t^{(k+1)-(k+1)} f(P_{k+1}) \tag{since $ A/h \cong C_{k+1} $ and $ A\setminus h \cong P_{k+1} $}\\
			& = f(C_{k+1}) + f(P_{k+1}).
	\end{align*}
	
	Therefore,
	\begin{align*}
		f(D) & = f(P_{k+1}) \left( f(C_{k+1}) + f(P_{k+1}) + t^{k+2}(1+z)^{k+2} \right).
	\end{align*}
	Clearly, both factors of $ f(D) $ are nonbasic GM-factors. Since $ D $ is non-separable and $ k \ge 1 $, the proof is complete.
\end{proof}

We extend the infinite family in Lemma~\ref{lem:infinite_family}, and characterise the greedoid polynomials of a new infinite family, as follows.
\begin{theorem}\label{thm:infinite_family}
	There exists an infinite family of non-separable digraphs $ D $ that have at least two nonbasic GM-factors, where
	\begin{equation*}
		f(D) = f(P_{k+1}) \left( f(C_{k+1}) + f(P_{k+1}) + \frac{t^{k+2}(1+z)^{k+2}(1-(t(1+z))^{\ell})}{1-t(1+z)} \right),\ \text{for}\ k,\ell \ge 1.
	\end{equation*}
\end{theorem}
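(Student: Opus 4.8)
The plan is to realise the right-hand side as the greedoid polynomial of an explicit non-separable digraph obtained by a one-parameter deformation of the family in Lemma~\ref{lem:infinite_family}, and then to prove the formula by induction on $\ell$. Concretely, I would take the digraph $D$ of Figure~\ref{fig:digraph_infinite_family} and replace the single edge $e=v_0v_1$ by a directed path $v_0=u_0\to u_1\to\cdots\to u_\ell=v_1$ of length $\ell$, leaving the two length-$k$ branches (the $a$-branch $v_0\to a_0\to\cdots\to a_k$ together with $v_1\to a_k$, and the $b$-branch $v_1\to b_0\to\cdots\to b_k$) unchanged. Call this digraph $D_{k,\ell}$. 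When $\ell=1$ it is exactly the digraph of Lemma~\ref{lem:infinite_family}, which supplies the base case, and the common factor $f(P_{k+1})$ together with the summand $f(C_{k+1})+f(P_{k+1})$ will be inherited unchanged from that case.

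For the inductive step I would apply the deletion--contraction recurrence of Proposition~\ref{pro:recurrence} to the first edge $e_1=v_0u_1$ of the new path. Contracting $e_1$ merges $v_0$ and $u_1$ and shortens the $v_0$--$v_1$ path by one, so $D_{k,\ell}/e_1\cong D_{k,\ell-1}$ and the inductive hypothesis applies. Deleting $e_1$ disconnects $u_1,\ldots,v_1$ from the root, which in turn strands the edge $v_1a_k$, the edge $v_1b_0$, and the entire $b$-branch; the only surviving feasible structure is the $a$-branch, which is a copy of $P_{k+1}$, while the remaining $(\ell-1)+1+1+k=k+\ell+1$ edges are greedoid loops. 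Hence $f(D_{k,\ell}\setminus e_1)=f(P_{k+1})(1+z)^{k+\ell+1}$. Counting vertices gives $r(D_{k,\ell})=2k+\ell+2$ and $r(D_{k,\ell}\setminus e_1)=k+1$, so the rank-jump exponent is $k+\ell+1$ and the deletion term equals $f(P_{k+1})\,(t(1+z))^{k+\ell+1}$.

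Putting the two branches of the recurrence together yields the clean recursion $f(D_{k,\ell})=f(D_{k,\ell-1})+f(P_{k+1})(t(1+z))^{k+\ell+1}$. Unwinding it from the base case and factoring out $f(P_{k+1})$, the terms beyond $f(C_{k+1})+f(P_{k+1})$ form the geometric series $\sum_{j=0}^{\ell-1}(t(1+z))^{k+2+j}$, whose sum is $t^{k+2}(1+z)^{k+2}\,(1-(t(1+z))^{\ell})/(1-t(1+z))$; this is precisely the asserted last summand, completing the computation of $f(D_{k,\ell})$.

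Two small points remain, neither of which I expect to be hard. First, non-separability: exactly as in Lemma~\ref{lem:infinite_family}, the two internally disjoint directed paths from $v_0$ to $a_k$ (one through the $a$-branch, one through the $u$-path and $v_1$) forbid any direct-sum decomposition, so $D_{k,\ell}$ is non-separable for all $k,\ell\ge1$. Second, both displayed factors are nonbasic: for $k\ge1$ we have $f(P_{k+1})\ne 1+t,1+z$, and the bracketed factor is manifestly neither, so $f(D_{k,\ell})$ has at least two nonbasic GM-factors. The only genuinely creative step is guessing the correct deformation --- extending $e$ to a path of length $\ell$ --- after which everything reduces to the recurrence and a geometric-series evaluation; verifying that the contraction really returns $D_{k,\ell-1}$ and that the deletion really produces exactly $k+\ell+1$ greedoid loops is where I would be most careful.
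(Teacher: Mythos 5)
Your proposal is correct and follows essentially the same route as the paper: the same digraph (the Lemma~\ref{lem:infinite_family} example with the root edge stretched into a path of length $\ell$), induction on $\ell$ with deletion--contraction applied to the first path edge, the same rank computation giving the exponent $k+\ell+1$ and the count of $k+\ell+1$ greedoid loops in the deletion minor, and the same geometric-series telescoping. Your explicit checks of non-separability and of both factors being nonbasic are slightly more detailed than the paper's, but the argument is the same.
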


\begin{proof}
	\begin{figure}[H]
		\centering
		\resizebox{0.4\textwidth}{!}
		{%
			\begin{tikzpicture}
			[every path/.style={color=black, line width=1.2pt}, 
			every node/.style={draw, circle, line width=1.2pt, inner sep=3pt},
			%every loop/.style={min distance=2cm, looseness=30},
			bend angle=45]
			
			\node[double]	(0) at (1.5,3)	{$ v_{0} $};
			\node	(1) at (3,3)	{$ v_{1} $};
			\node	(2) at (7,3)	{$ v_{\ell} $};
			
			\node	(3) at (1.5,1.5)	{$ a_{0} $};
			\node	(4) at (3.5,0)	{$ a_{k} $};
			
			\node	(5) at (7,1.5)	{$ b_{0} $};
			\node	(6) at (5,0)	{$ b_{k} $};
			
			\path	(0) edge [->] (3);
			\path	(0) edge [->] node [above, draw=none] {$ e $} (1);
			\path[dotted]	(3) edge [->,decorate,decoration={snake,amplitude=2mm,segment length=15mm}] (4);
			\path[dotted]	(1) edge [->,decorate,decoration={snake,amplitude=2mm,segment length=15mm}] (2);
			\path	(2) edge [->] (4);
			\path	(2) edge [->] (5);
			\path[dotted]	(5) edge [->,decorate,decoration={snake,amplitude=2mm,segment length=15mm}] (6);
			\end{tikzpicture}
		}%
		\caption{The digraph $ D $ in the proof of Theorem~\ref{thm:infinite_family}}
		\label{fig:digraph_infinite_family_ext}
	\end{figure}
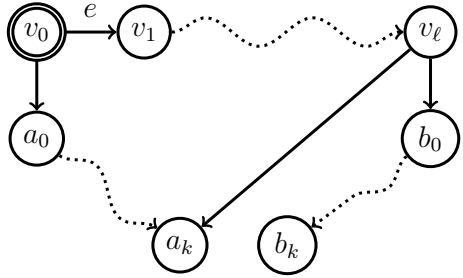
	
	Let $ D $ be the non-separable digraph rooted at vertex $ v_{0} $ shown in Figure~\ref{fig:digraph_infinite_family_ext}, where $ L = v_{0}\ldots v_{\ell} $ is a directed path in $ D $ of length $ \ell \ge 1 $ starting at $ v_{0} $. We proceed by induction on the length $ \ell $ of $ L $.
	
	For the base case, suppose $ \ell=1 $. By Lemma~\ref{lem:infinite_family}, we have
	\begin{align*}
		f(D) & = f(P_{k+1}) \left( f(C_{k+1}) + f(P_{k+1}) + t^{k+2}(1+z)^{k+2} \right)\\
			& = f(P_{k+1}) \left( f(C_{k+1}) + f(P_{k+1}) + \frac{t^{k+2}(1+z)^{k+2}(1-(t(1+z))^{\ell})}{1-t(1+z)} \right),
	\end{align*}
	and the result for $ \ell=1 $ follows.
	
	Assume that $ \ell > 1 $ and the result holds for every $ r < \ell $.
	
	Let $ e = v_{0}v_{1} \in E(D) $. By applying the deletion-contraction recurrence in Proposition~\ref{pro:recurrence} on $ e $, we obtain the digraphs $ D_{1}=D/e $ and $ D_{2}=D\setminus e $ as shown in Figure~\ref{fig:deletion_contraction_ext}.
	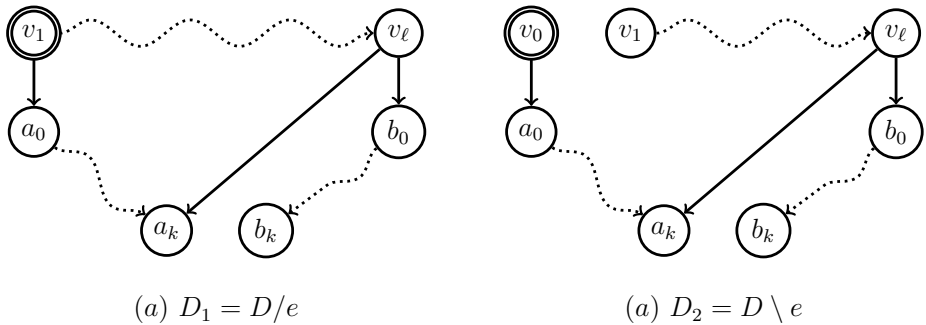
\begin{figure}[H]
		\centering
		\resizebox{0.8\textwidth}{!}
		{%
			\begin{tikzpicture}
			[every path/.style={color=black, line width=1.2pt}, 
			every node/.style={draw, circle, line width=1.2pt, inner sep=3pt},
			%every loop/.style={min distance=2cm, looseness=30},
			bend angle=45]
			
			\node[double]	(0) at (1.5,3)	{$ v_{1} $};
			%\node	(1) at (3,3)	{$ v_{1} $};
			\node	(2) at (7,3)	{$ v_{\ell} $};
			
			\node	(3) at (1.5,1.5)	{$ a_{0} $};
			\node	(4) at (3.5,0)	{$ a_{k} $};
			
			\node	(5) at (7,1.5)	{$ b_{0} $};
			\node	(6) at (5,0)	{$ b_{k} $};
						
			\path	(0) edge [->] (3);
			\path[dotted]	(0) edge [->,decorate,decoration={snake,amplitude=2mm,segment length=15mm}] (2);
			\path[dotted]	(3) edge [->,decorate,decoration={snake,amplitude=2mm,segment length=15mm}] (4);
			\path	(2) edge [->] (4);
			\path	(2) edge [->] (5);
			\path[dotted]	(5) edge [->,decorate,decoration={snake,amplitude=2mm,segment length=15mm}] (6);
			\node[draw=none]	() at (4.25,-1.2)	{$ (a)\ D_{1}=D/e $};
			
			\node[double]	(0) at (9,3)	{$ v_{0} $};
			\node	(1) at (10.5,3)	{$ v_{1} $};
			\node	(2) at (14.5,3)	{$ v_{\ell} $};
			
			\node	(3) at (9,1.5)	{$ a_{0} $};
			\node	(4) at (11,0)	{$ a_{k} $};
			
			\node	(5) at (14.5,1.5)	{$ b_{0} $};
			\node	(6) at (12.5,0)	{$ b_{k} $};
				
			\path	(0) edge [->] (3);
			\path[dotted]	(3) edge [->,decorate,decoration={snake,amplitude=2mm,segment length=15mm}] (4);
			\path[dotted]	(1) edge [->,decorate,decoration={snake,amplitude=2mm,segment length=15mm}] (2);
			\path	(2) edge [->] (4);
			\path	(2) edge [->] (5);
			\path[dotted]	(5) edge [->,decorate,decoration={snake,amplitude=2mm,segment length=15mm}] (6);
			\node[draw=none]	() at (11.75,-1.2)	{$ (a)\ D_{2}=D\setminus e $};
			\end{tikzpicture}
		}%
		\vspace{-0.7cm}
		\caption{Two minors $ D/e $ and $ D\setminus e $ of $ D $}
		\label{fig:deletion_contraction_ext}
	\end{figure}
	
	Note that $ D_{1} $ is a non-separable digraph rooted at $ v_{1} $. Since the directed path $ v_{1} \ldots v_{\ell} $ in $ D_{1} $ has length $ \ell-1 $, we use the inductive hypothesis to obtain $ f(D_{1}) $. Let $ R=\{v_{0},a_{0},\ldots,a_{k}\} \subset V(D_{2}) $, and $ A = D_{2}[R] $ be the subdigraph of $ D_{2} $ induced by $ R $. Clearly, $ A \cong P_{k+1} $. Hence, we have $ f(A) = f(P_{k+1}) $. Note that every edge $ g \in E(D_{2})\setminus E(A) $ is a greedoid loop, and $ \abs{E(D_{2})\setminus E(A)}=k+\ell+1 $. By using the recurrence formula, we have
	\begin{align*}
		f(D) & = f(D/e) + t^{r(D)-r(D\setminus e)}f(D\setminus e)\\
			& = f(P_{k+1}) \left( f(C_{k+1}) + f(P_{k+1}) + \frac{t^{k+2}(1+z)^{k+2}(1-(t(1+z))^{\ell-1})}{1-t(1+z)} \right)\\
			& \quad + t^{(2k+\ell+2)-(k+1)} \left( f(P_{k+1}) \cdot (1+z)^{k+\ell+1} \right)\\
			& = f(P_{k+1}) \bigg( f(C_{k+1}) + f(P_{k+1}) + \left( \frac{t^{k+2}(1+z)^{k+2}(1-(t(1+z))^{\ell-1})}{1-t(1+z)} \right)\\
			& \quad + t^{k+\ell+1} (1+z)^{k+\ell+1} \bigg)\\
			& = f(P_{k+1}) \left( f(C_{k+1}) + f(P_{k+1}) + \frac{t^{k+2}(1+z)^{k+2}(1-(t(1+z))^{\ell})}{1-t(1+z)} \right).
	\end{align*}
\end{proof}

%Let $ D $ be a rooted digraph and $ p $ is a directed path in $ D $. We write $ D/p $ for the digraph that is obtained by contracting every edge $ e \in E(p) $.

We observe that if every directed path has length at most one in a digraph $ D $ rooted at a vertex $ v $, the greedoid polynomial of $ D $ is trivial. In this scenario, every vertex in $ D $ is either a sink vertex or a source vertex. If $ v $ is a sink vertex, then every edge in $ D $ is a greedoid loop. If $ v $ is a source vertex, every edge that is not incident with $ v $ is a greedoid loop.

In the following theorem, we show that the greedoid polynomial of any digraph that has a directed path of length at least two is a nonbasic GM-factor of the greedoid polynomial of some non-separable digraph. The proof follows similar approaches as in Lemma~\ref{lem:infinite_family} and Theorem~\ref{thm:infinite_family}.
\begin{theorem}\label{thm:infinite_family_any_graph}
	For any digraph $ G $ that has a directed path of length at least two, there exists a non-separable digraph $ D $ where $ f(D) $ has $ f(G) $ as a nonbasic GM-factor.
	\begin{comment}
	Let $ D $ be a non-separable digraph rooted at $ v_{0} $, $ p=v_{0}v_{1} \ldots v_{\ell} $ be a directed path of length $ \ell \ge 1 $ in $ D $, $ G,H \le D $ be two disjoint digraphs each has a directed path $ v_{0}a_{0} \ldots a_{k} $ and $ v_{\ell}b_{0} \ldots b_{k} $ for $ k \ge 1 $, $ G \cong H $, and there exists an edge directed from $ v_{\ell} $ to $ a_{k} $ (see Figure~\ref{fig:digraph_infinite_family_any_graph}). Then, there exists an infinite family of non-separable digraphs $ D $ that have at least two nonbasic GM-factors, where
	\begin{equation*}
		f(D) = f(G) \left( f((D-(G-v_{\ell}))/p) + \sum_{i=1}^{\ell} t^{r(G)+i}(1+z)^{\abs{E(G)}+i} \right),\ \text{for}\ \ell \ge 1.
	\end{equation*}
	\end{comment}
\end{theorem}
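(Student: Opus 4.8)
The plan is to mimic the construction in Lemma~\ref{lem:infinite_family}, replacing the two rooted directed paths by two disjoint isomorphic copies of the given digraph. The claim is meaningful only when $ G $ has no greedoid loops (otherwise such a loop is itself a direct summand, so no digraph containing $ G $ could be non-separable), so I assume this. Fix in $ G $ a directed path $ v_{0}a_{0}a_{1}\ldots a_{k} $ of length $ k+1 \ge 2 $ issuing from the root $ v_{0} $, take a second copy $ H \cong G $ rooted at a new vertex $ v_{1} $ (with $ v_{1}b_{0}\ldots b_{k} $ corresponding to $ v_{0}a_{0}\ldots a_{k} $), and build $ D $ by adding the edge $ e = v_{0}v_{1} $ together with the edge $ v_{1}a_{k} $; the root of $ D $ is $ v_{0} $. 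The edge $ v_{1}a_{k} $ is the device that couples the two copies and stops $ D $ from being a direct sum. A single such $ D $ suffices for the existence claim, though, exactly as in Theorem~\ref{thm:infinite_family}, one could join $ v_{0} $ to the root of $ H $ by a directed path of length $ \ell $ and run an induction on $ \ell $ to produce a whole infinite family.

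First I would apply Proposition~\ref{pro:recurrence} to $ e $. Contracting $ e $ identifies the two roots, so $ D/e $ is the direct sum of $ H \cong G $ and the digraph $ A $ obtained from $ G $ by adding the chord $ v_{0}a_{k} $ (the image of $ v_{1}a_{k} $); by the multiplicative direct sum property $ f(D/e) = f(A)\,f(G) $. Deleting $ e $ makes $ v_{1} $ unreachable, so every edge of the $ H $-copy and the edge $ v_{1}a_{k} $ becomes a greedoid loop while the reachable part is precisely the copy $ G $, giving $ f(D\setminus e) = f(G)\,(1+z)^{\abs{E(G)}+1} $. A short rank count ($ r(D) = 2r(G)+1 $ and $ r(D\setminus e) = r(G) $, so the exponent in the recurrence is $ r(G)+1 $) then yields
\[
  f(D) = f(G)\left( f(A) + t^{\,r(G)+1}(1+z)^{\abs{E(G)}+1} \right),
\]
whence $ f(G) \mid f(D) $. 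To upgrade this to a genuine GM-factorisation I would realise the cofactor as a greedoid polynomial: if $ A' $ is $ A $ with one extra edge prepended from a new root, a single application of Proposition~\ref{pro:recurrence} gives $ f(A') = f(A) + t^{\,r(G)+1}(1+z)^{\abs{E(G)}+1} $, i.e.\ $ f(D) = f(G)\,f(A') $, with both $ G $ and $ A' $ of order at most that of $ D $.

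Two qualitative points remain. That $ f(G) $ is nonbasic is immediate from the hypothesis: the designated path has length at least two and issues from the root, so $ r(G) \ge 2 $, and the $ A=\emptyset $ summand of $ f(G) $ shows its $ t $-degree equals $ r(G)\ge 2 $, so $ f(G) \ne 1+t $ and $ f(G) \ne 1+z $. The main obstacle is showing that $ D $ is non-separable. I would argue this through reachability: the edges $ a_{k-1}a_{k} $ and $ v_{1}a_{k} $ both enter $ a_{k} $, so no feasible set contains both, yet each lies in some feasible set, and hence they must fall in the same part of any direct-sum decomposition; the presence of $ v_{1}a_{k} $ then forces $ e=v_{0}v_{1} $ into that same part, for otherwise $ v_{1} $ would be unreachable within its part and $ v_{1}a_{k} $ would be a greedoid loop of $ D $, contradicting that it is feasible in $ \{e,v_{1}a_{k}\} $; iterating the reachability argument along the $ a $-path, and then through the remaining edges, forces every edge into one part and leaves the other empty. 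The routine residue is the rank bookkeeping for an arbitrary loopless $ G $ and, for the full family, the induction on $ \ell $ carried out verbatim as in Theorem~\ref{thm:infinite_family}.
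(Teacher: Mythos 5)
Your construction and deletion--contraction computation coincide with the paper's own proof in its base case $\ell=1$ (the paper additionally runs an induction on the length $\ell$ of the connecting path to produce a whole family, exactly as you indicate), so the approach is essentially the same and correct. Your extra touches --- restricting to loopless $G$, realising the cofactor as $f(A')$ so that the divisibility is a genuine GM-factorisation, and the explicit non-separability argument --- supply details the paper leaves implicit.
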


\begin{figure}[H]
	\centering
	\resizebox{0.6\textwidth}{!}
	{%
		\begin{tikzpicture}
		[every path/.style={color=black, line width=1.2pt}, 
		every node/.style={draw, circle, line width=1.2pt, inner sep=3pt},
		%every loop/.style={min distance=2cm, looseness=30},
		bend angle=45]
		
		\node[double]	(0) at (1.5,4)	{$ a_{0} $};
		\node	(1) at (3,4)	{$ v_{1} $};
		\node	(2) at (5.5,4)	{$ a'_{0} $};
		
		\node	(3) at (0,4)	{$ a_{1} $};
		\node	(4) at (0,2.5)	{$ a_{2} $};
		\node	(5) at (2.7,1)	{$ a_{k} $};
		
		\node	(6) at (7,4)	{$ a'_{1} $};
		\node	(7) at (7,2.5)	{$ a'_{2} $};
		\node	(8) at (4.3,1)	{$ a'_{k} $};
		
		\node[draw=none]	(9) at (0.5,2)	{$ $};
		\node[draw=none]	(G) at (1.1,3)	{$ G $};
		\node[draw=none]	(H) at (6.1,3)	{$ G' $};
		
		\path	(0) edge [->] (1);
		\path	(0) edge [->] node [above, draw=none] {$ e $} (1);
		\path[dotted]	(1) edge [->,decorate,decoration={snake,amplitude=2mm,segment length=15mm}] (2);
		\path	(2) edge [->] (5);
		
		\path	(0) edge [->] (3);
		\path	(3) edge [->] (4);			
		\path[dotted]	(4) edge [->,decorate,decoration={snake,amplitude=2mm,segment length=15mm}] (5);			
		\path[dashed]	(0) edge [-,decorate,decoration={snake,amplitude=2mm,segment length=15mm}] (5);
		%\draw [dashed, red] (0) to [out=180,in=90] (9) to [out=270, in=180] (2);
		
		\path	(2) edge [->] (6);
		\path	(6) edge [->] (7);			
		\path[dotted]	(7) edge [->,decorate,decoration={snake,amplitude=2mm,segment length=15mm}] (8);			
		\path[dashed]	(2) edge [-,decorate,decoration={snake,amplitude=2mm,segment length=15mm}] (8);
		\end{tikzpicture}
	}%
	\caption{An illustration of the non-separable digraph $ D $ in Theorem~\ref{thm:infinite_family_any_graph}}
	\label{fig:digraph_infinite_family_any_graph}
\end{figure}
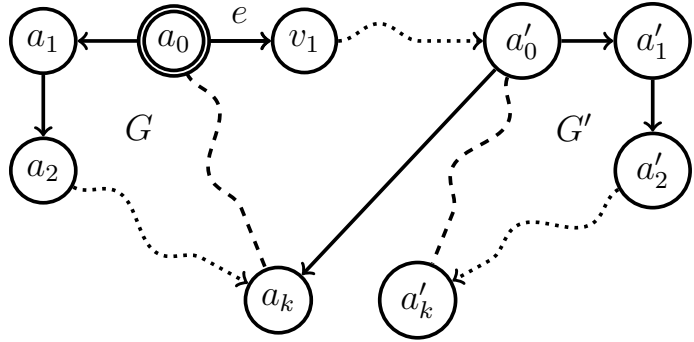

\begin{proof}
 	Let $ G $ be a digraph that has a directed path $ K = a_{0}a_{1} \ldots a_{k} $ of length $ k \ge 2 $, and $ G' $ be a copy of $ G $. The copy of $ K $ in $ G' $ is denoted by $ K' = a'_{0}a'_{1} \ldots a'_{k} $.
 	
 	We construct a non-separable digraph $ D_{\ell} $ using $ G $ and $ G' $, as follows. We first create a directed path $ L = a_{0}v_{1} \ldots v_{\ell-1}a'_{0} $ of length $ \ell $. We add a directed edge $ a'_{0}a_{k} $, and assign $ v_{0} $ as the root vertex of $ D_{\ell} $ (see Figure~\ref{fig:digraph_infinite_family_any_graph}).
 	
 	To show that $ f(G) $ is a nonbasic GM-factor of $ f(D_{\ell}) $, we proceed by induction on the length $ \ell $ of $ L $.
 	
 	For the base case, suppose $ \ell = 1 $. We apply the deletion-contraction recurrence in Proposition~\ref{pro:recurrence} on $ e=a_{0}a'_{0} $. We denote $ a_{0} $ the root vertex of the separable digraph $ D_{1}/e $. We have
 	\begin{align*}
 	 	f(D_{1}) & = f(D_{1}/e) + t^{r(D_{1})-r(D_{1}\setminus e)}f(D_{1}\setminus e)\\
 	 		& = f(G + a_{0}a_{k}) \cdot f(G) + t^{(2r(G)+1)-r(G)} f(G) \cdot (1+z)^{\abs{E(G)}+1}\\
 	 		& = f(G) \left( f(G + a_{0}a_{k}) + t^{r(G)+1} (1+z)^{\abs{E(G)}+1} \right).
 	\end{align*}
 	\noindent
 	Hence, the result for $ \ell=1 $ follows.
 	
 	Assume that $ \ell > 1 $ and the result holds for every $ r < \ell $.
 	
	For the inductive steps, we apply the deletion-contraction recurrence on $ e=v_{0}v_{1} $. We have
	\begin{align*}
		f(D_{\ell}) & = f(D_{\ell}/e) + t^{r(D_{\ell})-r(D_{\ell}\setminus e)}f(D_{\ell}\setminus e)\\
			& = f(D_{\ell}/e) + t^{(2r(G)+\ell)-r(G)} f(G) \cdot (1+z)^{\abs{E(G)}+\ell}\\
			& = f(D_{\ell}/e) + t^{r(G)+\ell} f(G) \cdot (1+z)^{\abs{E(G)}+\ell}.
	\end{align*}
	\noindent
	Note that $ D_{\ell}/e \cong D_{\ell-1} $. By the inductive hypothesis, $ f(D_{\ell}/e) $ has $ f(G) $ as a nonbasic GM-factor. This implies that $ f(D_{\ell}) $ has $ f(G) $ as a nonbasic GM-factor.
\end{proof}

We now have the following corollary.
\begin{corollary}
	Let $ D $ be a non-separable digraph that belongs to the infinite family in Theorem~\ref{thm:infinite_family_any_graph}. By replacing the edge $ a'_{0}a_{k} \in E(D) $ by any digraph $ R $ such that every edge in $ E(R) $ that is incident with $ a_{k} $ is an incoming edge of $ a_{k} $, then $ f(D) $ has $ f(G) $ as a nonbasic GM-factor. \hfill \qed
	\begin{comment}
	\begin{equation*}
		f(D) = f(G) \left( f((D-(H-v_{\ell}))/p) + \sum_{i=1}^{\ell} t^{r(G)+r(R-a_{k})+i}(1+z)^{\abs{E(G)}+\abs{E(R)}-1+i} \right),
	\end{equation*}
	for $ \ell \ge 1 $. \hfill \qed
	\end{comment}
\end{corollary}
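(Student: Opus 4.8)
The plan is to run the same induction on the length $\ell$ of the path $L$ as in the proof of Theorem~\ref{thm:infinite_family_any_graph}, carrying the digraph $R$ in place of the single edge $a'_0 a_k$ and checking that the two features which let $f(G)$ factor out survive the substitution. Write $D_\ell$ for the modified digraph and let $e$ be the first edge of $L$, namely $e = a_0 a'_0$ when $\ell = 1$ and $e = v_0 v_1$ when $\ell > 1$. As before I would apply the deletion-contraction recurrence of Proposition~\ref{pro:recurrence} to $e$ and show that $f(G)$ divides both resulting terms.

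The contraction $D_\ell / e$ is handled exactly as in the theorem. For $\ell > 1$, contracting $e$ shortens $L$ by one edge, so $D_\ell/e \cong D_{\ell-1}$ and the inductive hypothesis already provides $f(G)$ as a factor of $f(D_\ell/e)$. For the base case $\ell = 1$, contracting $e = a_0 a'_0$ identifies the root $a'_0$ of the copy $G'$ with the root $a_0$; since $R$ and $G'$ meet only at $a'_0$ and $R$ otherwise uses fresh internal vertices, the subdigraphs $G'$ and $G \cup R$ share only the root, so $D_1/e$ is their direct sum. The multiplicative direct sum property then gives $f(D_1/e) = f(G) \cdot f(G \cup R)$, and $f(G)$ is again a factor.

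The deletion $D_\ell \setminus e$ is where the hypothesis on $R$ is used, and it is the crux of the argument. After removing $e$ the root $a_0$ still reaches all of $G$, but $a'_0$ becomes unreachable, so every edge of $G'$ lies in no feasible set. I would then show that every edge of $R$ is likewise a greedoid loop. The only vertices of $R$ joined to the rest of $D_\ell$ are $a'_0$ and $a_k$; the former is now unreachable, and the hypothesis that every edge of $R$ incident with $a_k$ points into $a_k$ says that $a_k$ has no out-edge inside $R$, so the (still reachable) vertex $a_k$ cannot be used to enter $R$. Hence no internal vertex of $R$ is reachable and all $\abs{E(R)}$ of its edges are greedoid loops; together with the edges of $G'$ and the remaining edges of $L$ these account for all edges outside $G$, so the reachable part of $D_\ell \setminus e$ is exactly $G$ and $f(D_\ell \setminus e) = f(G)\,(1+z)^{m}$ for the corresponding loop count $m$. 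Thus the second term $t^{r(D_\ell)-r(G)}\,f(D_\ell \setminus e)$ of the recurrence also carries $f(G)$, and summing the two terms yields $f(G) \mid f(D_\ell)$. Since $G$ has a directed path of length at least two we have $r(G) \ge 2$, so $f(G)$ contains the term $t^{r(G)}$ and is therefore neither $1+t$ nor $1+z$; hence $f(G)$ is a nonbasic GM-factor, as in Theorem~\ref{thm:infinite_family_any_graph}.

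The main obstacle I anticipate is precisely this deletion step: justifying that no edge of $R$ survives as a non-loop once $e$ is removed. The only conceivable way for $R$ to contribute a feasible edge is by being entered through the still-reachable vertex $a_k$, and the sink hypothesis at $a_k$ is exactly what forbids this, so the whole corollary hinges on that condition. I would also note that the exact value of $r(D_\ell)$ is irrelevant to the conclusion, since it occurs only in the exponent of $t$; this is why the factorisation persists even though inserting $R$ may raise the rank of $D_\ell$ well above that of the single-edge case.
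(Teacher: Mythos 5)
Your proposal is correct and follows exactly the route the paper intends: the corollary is left without a written proof precisely because it is the induction of Theorem~\ref{thm:infinite_family_any_graph} rerun with $R$ in place of the edge $a'_{0}a_{k}$, and you correctly identify the one point where the new hypothesis enters — in the deletion step, the condition that every $R$-edge at $a_{k}$ points into $a_{k}$ forces all of $E(R)$ (together with $E(G')$ and the remaining path edges) to become greedoid loops, so $f(D_{\ell}\setminus e)=f(G)(1+z)^{m}$ and $f(G)$ divides both terms of the recurrence. No gaps.
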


\section{Computational Methods}

All labelled rooted digraphs (without loops and multiple edges, but cycles of size two are allowed) up to order six were provided by Brendan McKay on 28 March 2018 (personal communication from McKay to Farr). Each digraph is given as a list of numbers on one line separated by a single space. The first number is the order of the digraph, the second number is the size of the digraph, and each pair of subsequent numbers represent a directed edge of the digraph. For instance, $ 3\ 2\ 2\ 0\ 2\ 1 $ represents a digraph of order $ 3 $ and size $ 2 $. The directed edges  of the digraph are $ (2,0) $ and $ (2,1) $. Details are as follows: \[ \overbrace{3}^{\text{order}} \underbrace{2}_{\text{size}} \overbrace{2\ 0}^{\text{edge}} \underbrace{2\ 1}_{\text{edge}}. \]

We use the set of numbers $ \{0,1,\ldots,n-1\} $ to represent vertices for each digraph of order $ n $, and an edge list to represent the edge set of each digraph, e.g., $ [[0,1]] $ represents a digraph with a single edge directed from vertex $ 0 $ to vertex $ 1 $.

We use Python 3, Wolfram Mathematica 11 and Bash Shell (Mac OS Version 10.13.4), in computing results for greedoid polynomials of rooted digraphs up to order six.

Algorithms of our programs and the steps used in obtaining our results are given in \cite{YowPHD2019} and \cite{YowFM2018pp_greedoid}.

\section{Concluding Remarks}

In this paper, we presented (i) the results from exhaustive computation of all small rooted digraphs and (ii) the first results of the GM-factorability of greedoid polynomials of rooted digraphs.

We computed the greedoid polynomials for all rooted digraphs up to order six. From Table~\ref{table:unique_poly}, the ratio of PU to T-ISO shows a decreasing trend. We expect that as $ n $ increases, this ratio continues to decrease. Hence, we have the following conjecture.
\begin{conjecture}\label{conj:poly_vs_rooted_digraphs}
	Most rooted digraphs are not determined by their greedoid polynomials.
\end{conjecture}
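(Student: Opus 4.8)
The goal is to establish that the proportion of rooted digraphs of order $n$ that are \emph{determined} by their greedoid polynomial --- that is, that are the unique rooted digraph (up to isomorphism) realising that polynomial --- tends to $0$ as $n\to\infty$; equivalently, that almost every rooted digraph is GM-equivalent to some non-isomorphic rooted digraph. Writing $N(n)$ for the number T-ISO of rooted digraphs of order $n$ and $P(n)$ for the number PU of distinct greedoid polynomials, it suffices to prove $P(n)=o(N(n))$, since then the number of polynomials realised by a unique digraph is at most $P(n)=o(N(n))$. I would pursue two complementary lines of attack.

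The first line mirrors the matroid argument alluded to after Table~\ref{table:unique_poly}. Here $N(n)=2^{\Theta(n^2)}$, and since almost all such digraphs are asymmetric, the labelled and unlabelled counts agree up to a factor negligible on the exponential scale. For the upper bound on $P(n)$ one records that $f(D;t,z)$ has degree at most $r(D)\le n-1$ in $t$ and at most $\abs{E(D)}-r(D)\le n(n-1)$ in $z$, while all coefficients are nonnegative integers summing to $f(D;1,1)=2^{\abs{E(D)}}\le 2^{n(n-1)}$. These constraints alone give only $P(n)\le 2^{O(n^5)}$, which \emph{exceeds} $N(n)$: unlike the matroid setting, where the objects are doubly exponential in the ground-set size and the Tutte polynomials only singly exponential, rooted digraphs are themselves only singly exponential, so the crude pigeonhole collapses. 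Making this line work therefore demands the genuinely sharper estimate $P(n)=2^{o(n^2)}$, exploiting that the coefficients are not arbitrary but are subset counts, graded by rank and corank, of a greedoid arising from a digraph; the task is to show that these graded statistics carry only $o(n^2)$ bits of information.

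The second line is constructive. Using the multiplicative direct-sum property, each greedoid loop splits off a factor $1+z$, so $f(D)=(1+z)^{L}f(D_0)$, where $L$ is the number of greedoid loops and $D_0$ is the loopless core; moreover any two rooted digraphs with isomorphic directed branching greedoids, or with greedoid-isomorphic cores and equal $L$, are GM-equivalent. This produces a non-isomorphic partner $D'$ with $f(D')=f(D)$ for every $D$ admitting \emph{movable structure}: one may reconfigure the part of $D$ unreachable from the root, or substitute a non-isomorphic but greedoid-isomorphic gadget, while holding $L$ and $D_0$ fixed. The plan is then to show that the class of digraphs admitting such a local switching already has density $1-o(1)$.

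The hard part, and the reason the statement is posed as a conjecture rather than a theorem, is the typical dense digraph. A uniform random rooted digraph (each admissible arc present with probability $1/2$) is strongly connected with high probability; it then has no unreachable vertices, an essentially rigid loopless core, and --- by Theorem~\ref{thm:McMahon1993}, since it contains a directed cycle --- a factor $1+z$, but that basic factor does \emph{not} supply a non-isomorphic GM-equivalent digraph. For such digraphs neither the movable-loop switching nor any obvious greedoid-preserving perturbation is available, so the constructive line stalls exactly where the mass of the distribution lies; symmetrically, the counting line requires controlling the greedoid polynomial of dense strongly connected random digraphs well enough to bound its entropy by $o(n^2)$. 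I expect the decisive step to be the analysis of this dense regime: either a robust switching operation that preserves the entire greedoid while altering the isomorphism type and applies to a $1-o(1)$ fraction of digraphs, or a concentration argument showing that the rank--corank subset-count statistics of a random dense rooted digraph take only $2^{o(n^2)}$ values.
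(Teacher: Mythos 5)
This statement is a conjecture, and the paper contains no proof of it: the only support offered is the computed data in Table~\ref{table:unique_poly} (the ratio PU/T-ISO falls monotonically to $0.0930$ at $n=6$) together with an analogy to matroids, where the ratio of distinct Tutte polynomials to non-isomorphic matroids provably tends to $0$ by a counting argument. The paper even leaves open, explicitly, whether the ratio tends to $0$ or is bounded away from $0$. So there is no "paper proof" to match your attempt against; the relevant comparison is whether your proposal closes the gap that the authors could not.

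It does not, and you say so yourself, but your diagnosis is correct and in fact sharper than the paper's own discussion. The paper's heuristic is "greedoid polynomials of rooted digraphs should behave like Tutte polynomials of matroids"; you correctly observe that the proof technique behind the matroid fact does not transfer, because matroids on $m$ elements number doubly exponentially in $m$ while rooted digraphs of order $n$ number only $2^{\Theta(n^2)}$, and the crude coefficient-entropy bound on $f(D;t,z)$ (at most $O(n^3)$ monomials, each coefficient at most $2^{n(n-1)}$) gives only $2^{O(n^5)}$ distinct polynomials, which is useless for pigeonhole. That observation is accurate and identifies exactly why the conjecture is hard. The genuine gaps in your two lines are the ones you flag: the counting line needs the unproved entropy bound $P(n)=2^{o(n^2)}$, and the constructive line (splitting off $(1+z)^{L}$ from greedoid loops and reconfiguring the unreachable part, which is sound, by the direct-sum property) covers only digraphs with movable structure, a class of vanishing density since a uniform random rooted digraph is strongly connected with high probability; Theorem~\ref{thm:McMahon1993} then gives a factor $1+z$ but no second, non-isomorphic digraph realising the same polynomial. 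In short: your submission is a well-reasoned research programme, not a proof, and the statement remains exactly as open after it as it is in the paper.
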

 
We found that the multiplicative direct sum property, and the existence of greedoid loops and directed cycles, are not the only characteristics that determine if greedoid polynomials of rooted digraphs factorise. We showed that there exists an infinite family of non-separable digraphs where their greedoid polynomials GM-factorise. We also characterised the greedoid polynomials of rooted digraphs that belong to the family.

We now suggest some problems for further research.
\begin{enumerate}
	\item Investigate the factorability of greedoid polynomials of rooted graphs, or even greedoids in general.	
\end{enumerate}

Gordon and McMahon gave a graph-theoretic interpretation for the highest power of $ 1 + z $ for greedoid polynomials of rooted digraphs. We could investigate a similar problem for the other basic factor $ 1 + t $.
\begin{enumerate}[resume]
	\item Does there exist a graph-theoretic interpretation for the highest power of $ 1 + t $ for greedoid polynomials of rooted digraphs?
\end{enumerate}

By Theorem~\ref{thm:infinite_family_any_graph}, we can see that there exist (totally) $ k $-nonbasic rooted digraphs for $ k \ge 3 $.
\begin{enumerate}[resume]
	\item For $ k \ge 2 $, does there exist a (totally) $ k $-primary rooted digraph?
\end{enumerate}

For rooted digraphs of order six, there are 15 totally $ 2 $-nonbasic digraphs and nine totally $ 1 $-primary digraphs.
\begin{enumerate}[resume]
	\item For $ k \ge 1 $, can we characterise greedoid polynomials of totally $ (k+1) $-nonbasic digraphs and totally $ k $-primary digraphs?
	\item Determine necessary and sufficient conditions for greedoid polynomials of rooted digraphs to factorise.
\end{enumerate}

\section*{Acknowledgement}

We thank Gary Gordon for his useful feedback.

\end{document}